\def\r{\rightarrow}
\newcommand{\fdem}{\hspace*{\fill}~$\Box$\par\endtrivlist\unskip}
\newcommand{\E}{\mathbb{E}}     
\renewcommand{\P}{\mathbb{P}}     
\renewcommand{\L}{\mathbb{L}}
\newcommand{\N}{\mathbb{N}}     
\newcommand{\R}{\mathbb{R}}     
\newcommand{\C}{\mathbb{C}} 
\newcommand{\X}{\mathbb{X}} 
\newcommand{\T}{\mathbb{T}}
\renewcommand{\r}{\mathop{\rightarrow}}
\newcommand{\cA}{\mbox{$\cal A$}}
\newcommand{\cD}{\mbox{$\cal D$}}
\newcommand{\cE}{\mbox{$\cal E$}}
\newcommand{\cG}{\mbox{$\cal G$}}
\newcommand{\cM}{\mbox{$\cal M$}}
\newcommand{\cN}{\mbox{$\cal N$}}
\newcommand{\cP}{\mbox{$\cal P$}}
\newcommand{\cS}{\mbox{$\cal S$}}
\newcommand{\cY}{\mbox{$\cal Y$}}
\newtheorem{theo}{Theorem}[section]
\newtheorem{pro}{Proposition}[section]
\newenvironment{proof}[1]{\textit{Proof#1.\,}}{\fdem}
\newtheorem{lem}{Lemma}[section]
\newtheorem{rem}{Remark}[section]
\newtheorem{ass}{AC\hspace*{-3pt}}
\newtheorem{assu}{U\hspace*{-3pt}}
\theoremstyle{break}\theorembodyfont{\rmfamily}
\newcommand{\IP}{\textbf{(I-A)}} 
\newcommand{\M}[1]{\boldsymbol{(\mathrm{M}#1)}} 
\newcommand{\NL}{\textbf{(NL)}}
\begin{document}

\title{Local limit theorem for densities of the additive component of a finite Markov Additive Process}  

\author{Loïc HERV\'E and James LEDOUX  \footnote{
Université
Européenne de Bretagne, I.R.M.A.R. (UMR-CNRS 6625), INSA de Rennes}}

\maketitle

\begin{abstract}
In this paper, we are concerned with centered Markov Additive Processes $\{(X_t,Y_t)\}_{t\in\T}$ where the driving Markov process $\{X_t\}_{t\in\T}$ has a finite state space. Under suitable conditions, we provide a local limit theorem for the density of the absolutely continuous part of the probability distribution of $t^{-1/2}Y_t$ given $X_0$. The rate of convergence and the moment condition are the expected ones with respect to the i.i.d case. An application to the joint distribution of local times of a finite jump process is sketched. \\[1mm]
\textbf{Keywords:} Gaussian approximation, Local time, Spectral method, Markov random walk.
\end{abstract}

%===================
%===================
\section{Introduction}
%===================
%===================

When $\{X_k\}_{k\geq1}$ is a sequence of centered independent and identically distributed (i.i.d.) real valued random variables such that $Y_n:=\sum_{i=1}^n X_i$ has a bounded density for some $n$, it is well-known that the density $f_n$ of  $n^{-1/2}Y_n$ satisfies the following  Local Limit Theorem (LLT)  
$$\lim_{n\r+\infty}\sup_{y \in \R} \big|f_{n}(y) - \eta(y)\big| = 0,$$
where $\eta(\cdot)$ is the density of the Gaussian distribution $\cN(0,\sigma^2)$, with $\sigma^2:=\E[X_1^2]$; see \cite{Gne54} and \cite{IbrLin71,Fel71} for detailed discussions. If $X_1$ has a bounded density and a third moment, then the rate of the previous convergence is $O(n^{-1/2})$; see \cite{SirSha65,Sir66,korZhu98}. 
 
This paper extends the last result to centered Markov Additive Processes (MAP) $\{(X_t,Y_t)\}_{t\in \T}$ with state space $\X \times \R^d$, where  $\X:=\{1,\ldots,N\}$ and $\T:=\N$ or $\T:=[0,\infty)$. Recall from \cite{Asm03} that $\{(X_t,Y_t)\}_{t\in \T}$ is a Markov process on $\X \times \R^d$ with  a transition semi-group, denoted by $\{Q_t\}_{t\in\T}$, which satisfies 
	\begin{equation} \label{Def_Add}
	 \forall (k,y)\in\X\times \R^d, \ \forall (\ell,B) \in \X\times B(\R^d),\quad 
	 Q_t(k,y;\{\ell\}\times B)=  Q_t(k,0;\{\ell\}\times B-y).
\end{equation}
The transition semi-group of the driving Markov process $\{X_t\}_{t\in\T}$ is denoted by $\{P_t\}_{t\in\T}$. The stochastic $N\times N$-matrix $P:=P_1$ is assumed to be irreducible and aperiodic. Moreover the mass of the singular part of the conditional probability distribution of $t^{-1/2}Y_t$ given $X_0=k$ is supposed to converge exponentially fast to zero. Let  $f_{k,t}(\cdot)$ be the density of the  absolutely continuous part of this conditional distribution. Under a third moment condition on $Y_t$ and some conditions on $f_{k,t}(\cdot)$ and its Fourier transform (see \textbf{(AC\ref{AS2})-(AC\ref{AS0})}), we prove in Theorem~\ref{main-id} that, for every $k\in\X$, the density $f_{k,t}(\cdot)$ 
satisfies essentially the following property: 
\begin{equation*} 
\sup_{y \in \R^d} \big|f_{k,t}(y) - \eta_{_\Sigma}(y)\big| = O\big( t^{-1/2}\big) 
\end{equation*}
where $\eta_{_\Sigma}(\cdot)$ denotes the density of $\cN(0,\Sigma)$. Matrix $\Sigma$ is the asymptotic covariance provided by the Central Limit Theorem (CLT) and is assumed to be invertible. Our moment condition and rate of convergence are the expected ones with respect to the i.i.d.~case. The proof of Theorem~\ref{main-id} is based on the spectral method (e.g.~see \cite{GuiHar88,HenHer01} when $\T:=\N$, and \cite{FerHerLed12} when $\T:=[0,+\infty)$).

To the best of our knowledge, Theorem~\ref{main-id}  is new. The known contribution to LLT for densities of additive components of MAP is in \cite{San08}, where only the discrete time case is considered and exponential-type moment condition on $Y_1$ is assumed (the rate of convergence is not addressed). Note that, for discrete time, our Theorem~\ref{main-id} only requires a third moment condition on $Y_1$. Moreover it is worth noticing that the probability distribution of $Y_t$ is not assumed to be absolutely continuous with respect to the Lebesgue measure on $\R^d$. An application to the joint distribution of local times of a finite jump process is provided in Section~\ref{appli-local-times}. The proof of Theorem~\ref{main-id} is given in the last section. In order to save space, some details are reported in a companion paper which is referred to as [HL]\footnote{Available at \url{http://arxiv.org/abs/1305.5644}}.

This work was motivated by a question in relation
with a self-attracting continuous time process called the Vertex
Reinforced Jump Process (VRJP) recently investigated by \cite{SabTar11}. This
process is closely related to the Edge Reinforced Random Walk (ERRW) and
was instrumental in the proof of the recurrence of the ERRW in all
dimensions at strong reinforcement.
In a paper in progress, \cite{SabTar} make a link between the
VRJP and accurate pointwise large deviation for reversible Markov jump
processes: it appears that the limit measure of the VRJP \cite{SabTar11} is closely related to the first order of pointwise large
deviations which are derived in \cite{SabTar} for continuous time Markov
processes using the present local limit theorem and Remark~\ref{main-unif}.

\noindent \textbf{Notations.} Any vector $v\equiv(v_k)\in\C^N$ is considered as a row-vector and $v^{\top}$ is the corresponding column-vector. The vector with all components equal to 1 is denoted by $\mathbf{1}$. The Euclidean scalar product  and its associated norm  on $\C^N$ is denoted by $\langle\cdot,\cdot\rangle$ and $\| \cdot\|$ respectively. 
The set of $N\times N$-matrices with complex entries is denoted by $\cM_N(\C)$. 
We use the following norm $\|\cdot\|_0$ on $\cM_N(\C)$:  
$$\forall A\equiv (A_{k,\ell})\in\cM_N(\C),\quad \|A\|_0:= \max\big\{|A_{k,\ell}|: (k,\ell)\in\{1,\ldots,N\}^2 \big\}.$$ 
For any bounded positive measure $\nu$ on $\R^d$, we define its Fourier transform as: 
$$\forall \zeta\in\R^d,\quad \widehat\nu(\zeta) := \int_{\R^d}e^{i \langle \zeta , y \rangle}\, d\nu(y).$$
Let  $\cA\equiv (\cA_{k,\ell})$ be a $N\times N$-matrix with entries in the set of bounded positive measures on $\R^d$. We set 
\begin{equation}\label{def-Four-mat}
\forall B\in B(\R^d), \quad \cA(1_B) := \big(\cA_{k,\ell}(1_B)\big), \qquad
\forall \zeta\in\R^d, \quad \widehat \cA(\zeta) := (\widehat\cA_{k,\ell}(\zeta)). 
\end{equation}

%=====================
%=====================
\section{The LLT for the density process} \label{sec-ass-stat}
%=====================
%=====================
Let $\{(X_t,Z_t)\}_{t\in \T}$ be an MAP with state space $\X \times \R^d$, where  $\X:=\{1,\ldots,N\}$ and the driving Markov process $\{X_t\}_{t\in\T}$ has transition semi-group  $\{P_t\}_{t\in\T}$. We refer to \cite[Chap. XI]{Asm03} for the basic material on such MAPs. The conditional probability to $\{X_0=k\}$ and its associated expectation are denoted by $\P_k$ and $\E_k$ respectively. Note that if $T :\R^d\r\R^m$ is a linear transformation, then $\{X_t,T(Z_t)\}_{t\in\T}$ is still a MAP on $\X\times R^m$ (see Lem. C.1 in [HL]). We suppose that  $\{X_t\}_{t\in\T}$ has a unique invariant probability measure $\pi$. Set $m:= \E_{\pi}[Z_1]=\sum_k \pi(k) \E_{k}[Z_1]\in\R^d$. Consider the centered MAP $\{(X_t,Y_t)\}_{t\in \T}$ where $Y_t := Z_t - t\, m$. The two next assumptions are involved in both CLT and LLT below. \\
\vspace*{-4mm}

\noindent \IP ~: 
{\it The stochastic $N\times N$-matrix $P:=P_1$ is irreducible and aperiodic.}

\noindent $\M{\alpha}$ : 
{\it 
The family of r.v.~$\{Y_v\}_{v\in(0,1]\cap\T}$ satisfies the  uniform moment condition of order $\alpha$:} 
\begin{equation} \label{moment-alpha} 
M_{\alpha}:= \max_{k\in\X} \sup_{v\in(0,1]\cap\T} \E_k\big[\|Y_v\|^{\alpha}\big] < \infty.  
\end{equation}
The next theorem provides a CLT for $t^{-1/2}Y_t$, proved when $d:=1$ in \cite{KeiWis64} for $\T=\N$ and in \cite{FukHit67} for $\T=[0,\infty)$; see  \cite{FerHerLed12} for $\rho$-mixing driving Markov processes. 
\begin{theo} \label{lem-clt}
Under Assumptions~\emph{\IP}~and $\M{2}$, $\{t^{-1/2}Y_t\}_{t\in \T}$ converges in distribution to a $d$-dimensional Gaussian law $\cN(0,\Sigma)$ when $t\r+\infty$. 
\end{theo}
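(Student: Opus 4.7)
The plan is to establish the CLT via the Nagaev--Guivarc'h spectral method, which is the framework the paper itself is built on. The central object is the Fourier--perturbed transition operator
\begin{equation*}
\bigl(\widehat Q_t(\zeta)f\bigr)(k) := \E_k\!\left[e^{i\langle\zeta,Y_t\rangle} f(X_t)\right], \qquad k\in\X,\ f:\X\to\C,
\end{equation*}
which, thanks to the additive property \eqref{Def_Add} of the MAP, is a semigroup in $t$: $\widehat Q_{s+t}(\zeta)=\widehat Q_s(\zeta)\widehat Q_t(\zeta)$. The characteristic function to be controlled is then $\E_k[e^{i\langle\zeta,Y_t\rangle}] = (\widehat Q_t(\zeta)\mathbf{1})(k)$, and by L\'evy's continuity theorem it suffices to prove, for every fixed $\zeta\in\R^d$ and every $k\in\X$, that $(\widehat Q_t(\zeta/\sqrt t)\mathbf{1})(k)\to e^{-\frac12\langle\zeta,\Sigma\zeta\rangle}$ as $t\to+\infty$.

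First I would carry out the perturbative spectral analysis of $\widehat Q_1(\zeta)$ near $\zeta=0$. By \IP, $P=\widehat Q_1(0)$ is an irreducible aperiodic stochastic $N\times N$-matrix, hence has $1$ as a simple, isolated dominant eigenvalue with right eigenvector $\mathbf{1}^\top$ and left eigenvector $\pi$, and the rest of the spectrum lies in a disk of radius strictly less than $1$. Next, moment condition $\M{2}$ guarantees that $\zeta\mapsto\widehat Q_1(\zeta)$ is of class $C^2$ at $0$ with respect to $\|\cdot\|_0$: differentiation under the expectation is justified by $\|Y_1\|^2$ integrability uniformly in $k$. Standard analytic perturbation theory for isolated eigenvalues (Kato) then yields a $C^2$ family $\zeta\mapsto(\lambda(\zeta),\Pi(\zeta))$, defined on a neighborhood $\cV$ of $0$, such that $\lambda(0)=1$, $\Pi(0)=\mathbf{1}^\top\pi$ is the rank--one eigenprojection, and
\begin{equation*}
\widehat Q_1(\zeta) = \lambda(\zeta)\Pi(\zeta) + R(\zeta), \quad \Pi(\zeta)R(\zeta)=R(\zeta)\Pi(\zeta)=0,\ \ \mathrm{sp}(R(\zeta))\subset\{|z|\leq \kappa\}
\end{equation*}
for some $\kappa<1$ uniformly on $\cV$. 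By the semigroup property, $\widehat Q_n(\zeta)=\lambda(\zeta)^n\Pi(\zeta)+R(\zeta)^n$ for integer $n$; in the continuous--time case one performs the same analysis on the generator $\widehat L(\zeta)$ of $\widehat Q_t(\zeta)$ (see [HL] or \cite{FerHerLed12}) and writes $\widehat Q_t(\zeta)=e^{t\mu(\zeta)}\Pi(\zeta)+\widetilde R_t(\zeta)$ with $\|\widetilde R_t(\zeta)\|_0\leq C e^{-\delta t}$ for $\zeta\in\cV$.

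The second step is the Taylor expansion of $\lambda$ at $0$. Differentiating the identity $\widehat Q_1(\zeta)\Pi(\zeta)=\lambda(\zeta)\Pi(\zeta)$ and pairing with $\pi$ on the left and $\mathbf{1}^\top$ on the right gives $\lambda'(0)\cdot\zeta = i\langle\zeta, \E_\pi[Y_1]\rangle = 0$ by the centering $\E_\pi[Y_1]=0$. Differentiating once more and again pairing with $\pi$ and $\mathbf{1}^\top$ produces, after a routine calculation involving the fundamental matrix $(I-P+\mathbf 1^\top\pi)^{-1}$, the identification
\begin{equation*}
\lambda(\zeta) = 1 - \tfrac12\langle\zeta,\Sigma\zeta\rangle + o(\|\zeta\|^2),
\end{equation*}
where $\Sigma$ is exactly the asymptotic covariance of the CLT (and is the $\Sigma$ appearing throughout the paper). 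Plugging $\zeta/\sqrt t$ into the decomposition, $\lambda(\zeta/\sqrt t)^t\to e^{-\frac12\langle\zeta,\Sigma\zeta\rangle}$, $\Pi(\zeta/\sqrt t)\to\mathbf 1^\top\pi$, and the remainder term decays at least geometrically; this yields the pointwise convergence of characteristic functions and hence the claimed CLT.

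The main obstacle, in my view, is not the scalar asymptotic of $\lambda$ but the preparatory \textit{regularity} step: verifying that $\zeta\mapsto\widehat Q_1(\zeta)$ (or its generator, in continuous time) is genuinely $C^2$ in the operator norm from $\M{2}$ alone, and that Kato's perturbation theorem applies uniformly on a neighborhood of $0$ so that the spectral gap persists. In the discrete--time finite state-space setting this is straightforward from the uniform integrability of $\|Y_1\|^2$ and the compactness of $\cM_N(\C)$; the continuous--time case requires the additional control of $\sup_{v\in(0,1]}\E_k[\|Y_v\|^2]$ built into $\M{2}$, together with the semigroup machinery of \cite{FerHerLed12} / [HL]. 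Once this regularity is in place, everything else reduces to the Taylor expansion above and L\'evy's theorem.
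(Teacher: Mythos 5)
Your argument is correct, and it is essentially the approach this paper is built on: Theorem~\ref{lem-clt} is not proved here at all but quoted from \cite{KeiWis64}, \cite{FukHit67} and \cite{FerHerLed12}, and the machinery you set up --- the Fourier matrices $\widehat\cY_t(\zeta)$, their semigroup property (\ref{semi-group}), Kato perturbation of the simple dominant eigenvalue of $\widehat\cY_1(\zeta)$ near $\zeta=0$ under \IP, and the expansion $\lambda(\zeta)=1-\tfrac12\langle\zeta,\Sigma\zeta\rangle+o(\|\zeta\|^2)$ with $\lambda'(0)=i\,\E_\pi[Y_1]=0$ --- is exactly what the paper deploys later, in a sharper form under $\M{3}$, in Lemma~\ref{lem-dec-vp}. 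The one point where your route differs is the continuous-time case: you pass through a generator of $\{\widehat\cY_t(\zeta)\}_{t\ge0}$, which presupposes enough regularity in $t$ for the semigroup to be a matrix exponential, whereas the paper's own treatment (see the proof of Lemma~\ref{lem-dec-vp}) factorizes through the discrete skeleton, $\phi_{k,t}(\zeta)=e_k\,\widehat\cY_1(\zeta)^{\lfloor t\rfloor}\,\widehat\cY_v(\zeta)\,\mathbf{1}^{\top}$ with $v=t-\lfloor t\rfloor$, and uses the uniformity in $v\in(0,1]$ built into $\M{2}$ to show the fractional-part factor is harmless at the scale $\zeta/\sqrt t$ (indeed $\|\widehat\cY_v(\zeta/\sqrt t)-P_v\|_0\le t^{-1/2}\|\zeta\|\,M_2^{1/2}$ and $P_v\mathbf{1}^{\top}=\mathbf{1}^{\top}$); the skeleton argument is the lighter option since it needs no regularity of $t\mapsto\widehat\cY_t(\zeta)$. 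Two small remarks: the identification of $\Sigma$ as ``the asymptotic covariance of the CLT'' is, at this stage, a definition rather than a fact (it is whatever the second-order term of $\lambda$ produces), and its positive semidefiniteness comes for free because $e^{-\frac12\langle\zeta,\Sigma\zeta\rangle}$ arises as a pointwise limit of characteristic functions continuous at $0$, so L\'evy's theorem indeed delivers a (possibly degenerate) Gaussian limit, which is all the theorem claims.
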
 
\begin{rem} When $\T=\N$, the moment condition reads as: $\E_\pi[\|Y_1\|^2] < \infty$. When $\T:=[0,\infty)$, since $\X$ is a finite set, we know that Condition~$\M{2}$ on the family $\{Y_v\}_{v\in (0,1] \cap \T}$ is still equivalent to $\E_\pi[\|Y_1\|^2] < \infty$  \emph{\cite{FukHit67}}. Moreover, note that \emph{\IP} is satisfied  in the continuous time case provided that the generator of $\{X_t\}_{t\ge 0}$ is irreducible. 
\end{rem}

Now let us specify the notations and assumptions involved in our LLT. 
For any $t\in\T$ and $(k,\ell)\in\X^2$, we define the  bounded positive measure $\cY_{k,\ell,t}$ on $\R^d$ as follows (see (\ref{Def_Add})): 
\begin{equation} \label{loi-Yt-kl}
\forall B\in B(\R^d),\quad \cY_{k,\ell,t}(1_B) := \P_k\big\{X_t=\ell, Y_t\in B\big\} = Q_t(k,0;\{\ell\}\times B).  
\end{equation}
Let $\ell_d$ denote the Lebesgue measure on $\R^d$. From the Lebesgue decomposition of $\cY_{k,\ell,t}$ w.r.t.~$\ell_d$, there are two bounded positive measures $\cG_{k,\ell,t}$ and $\mu_{k,\ell,t}$ on $\R^d$ such that
\begin{equation} \label{Leb-kl}
\forall B\in B(\R^d), \quad  \cY_{k,\ell,t}(1_B):= \cG_{k,\ell,t}(1_B) + \mu_{k,\ell,t}(1_B) 
\ \text{ where }    \cG_{k,\ell,t}(1_B)=\int_B g_{k,\ell,t}(y)\, dy   
\end{equation}
for some measurable function $g_{k,\ell,t} : \R^d\r[0,+\infty)$, and such that $\mu_{k,\ell,t}$ and $\ell_d$ are mutually singular. The  measure $\cG_{k,\ell,t}$ is called the absolutely continuous (a.c.) part of  $\cY_{k,\ell,t}$ with associated density $g_{k,\ell,t}$. 
For any $t\in\T$, we introduce the following $N\times N$-matrices with entries in the set of bounded positive measures on $\R^d$ 
\begin{subequations}
\begin{equation} \label{def-meas-mat} 
 \cY_{t} := (\cY_{k,\ell,t})_{(k,\ell)\in\X^2},\quad  \cG_t:= (\cG_{k,\ell,t})_{(k,\ell)\in\X^2}, \quad  \cM_t := (\mu_{k,\ell,t})_{(k,\ell)\in\X^2}, 
\end{equation}
and for every $y\in\R^d$, we define the following real $N\times N$-matrix: 
\begin{equation} \label{def-Gt-density}
G_t(y) := \big(g_{k,\ell,t}(y)\big)_{(k,\ell)\in\X^2}.
\end{equation}
Then the component-wise equalities (\ref{Leb-kl}) read as follows in a matrix form: for any $t\in\T$  
\begin{equation} \label{dec-leb-mat}
\forall B\in B(\R^d), \quad \cY_{t}(1_B) = \cG_{t}(1_B) + \cM_{t}(1_B) =\int_B G_t(y) dy + \cM_t(1_B). 
\end{equation}
\end{subequations}
The assumptions on the a.c.~part $\cG_t$ and the singular part $\cM_t$ of $\cY_t$ are the following ones.
\begin{ass}: \label{AS2}
There exist $c>0$ and $\rho\in(0,1)$ such that   
\begin{equation} \label{masse-id}
\forall t >0,\quad  \| \cM_{t}(1_{\R^d}) \|_{0}  \leq c\rho^t 
\end{equation}
and there exists $t_0>0$ such that $\rho^{t_0}\max(2,cN)\leq 1/4$ and 
\begin{equation} \label{fourier-id}
\Gamma_{t_0}(\zeta) :=  \sup_{w\in[t_0,2t_0)} \| \widehat G_{w}(\zeta)\|_0 \longrightarrow 0\quad \text{when}\ \|\zeta\|\r+\infty.
\end{equation}
\end{ass}
\vspace*{-4mm}
 \begin{ass}: \label{AS0} For any $t>0$, there exists an open convex subset $\cD_t$ of $\R^d$ such that $G_t$ vanishes on $\R^d\setminus\overline{\cD}_t$, where $\overline{\cD}_t$ denotes the closure of $\cD_t$. Moreover $G_t$ is continuous on $\overline{\cD}_t$ and differentiable on $\cD_t$, with in addition 
\begin{subequations}
\begin{gather*} 
\sup_{t>0} \sup_{y\in \overline{{\cal D}}_t} \| G_t(y)\|_0 <\infty  \qquad \sup_{y\in \partial {\cal D}_t}  \|G_t(y)\|_0 = O\big(t^{-(d+1)/2}\big)  \quad \text{where}\ \ \partial {\cal D}_t := \overline{{\cal D}}_t\setminus \cD_t  \\ 
 j=1,\ldots,d:\quad \sup_{t>0}\sup_{y\in {\cal D}_t} \big\| \frac{\partial G_t}{\partial y_j}(y)\big\|_0 <\infty.  
\end{gather*}
\end{subequations}
\end{ass}

The next theorem is the main result of the paper. 
\vspace*{-2mm}
\begin{theo} \label{main-id}
Let $\{(X_t,Y_t)\}_{t\in \T}$ be a centered MAP 
satisfying Assumptions~\emph{\IP}, $\M{3}$, \emph{\textbf{(AC\ref{AS2})-(AC\ref{AS0})}}. Moreover assume that the matrix $\Sigma$ associated with the CLT in Theorem~\ref{lem-clt} is invertible. Then, 
for every $k\in\X$, the density $f_{k,t}(\cdot)$ of the a.c.~part   of the probability distribution of $t^{-1/2}Y_t$ under $\P_k$ satisfies the following property: 
\begin{equation} \label{result_final}
\sup_{y \in \R^d} \big|f_{k,t}(y) - \eta_{_\Sigma}(y)\big| \leq O\big( t^{-1/2}\big) + O\big(\sup_
{y\notin {\cal D}_t}\eta_{_\Sigma}(t^{-1/2}y) \big)
\end{equation}
where $\eta_{_\Sigma}(\cdot)$ denotes the density of $\cN(0,\Sigma)$.  
\end{theo}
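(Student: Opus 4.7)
The approach is a Nagaev--Guivarc'h spectral analysis of the matrix-valued Fourier transform of $\cY_t$, combined with inverse Fourier estimates. By the MAP structure (\ref{Def_Add}) the family $\{\widehat \cY_t(\zeta)\}_{t\in\T}$ is a semigroup in $\cM_N(\C)$ with $\widehat \cY_t(0)=P_t$, and the Lebesgue decomposition (\ref{dec-leb-mat}) together with (\ref{masse-id}) splits it as $\widehat \cY_t(\zeta)=\widehat G_t(\zeta)+\widehat \cM_t(\zeta)$ with $\|\widehat \cM_t\|_0\le c\rho^t$ uniformly in $\zeta$. Summing entries along the $\ell$-row and rescaling $\zeta\mapsto\zeta/\sqrt t$, the Fourier transform of $f_{k,t}$ reads
$$\widehat f_{k,t}(\zeta)=\bigl(\widehat G_t(\zeta/\sqrt t)\mathbf{1}^\top\bigr)_k=\bigl(\widehat \cY_t(\zeta/\sqrt t)\mathbf{1}^\top\bigr)_k-\bigl(\widehat \cM_t(\zeta/\sqrt t)\mathbf{1}^\top\bigr)_k,$$
so the singular contribution is an $O(\rho^t)$ perturbation and everything reduces to perturbing $\widehat \cY_t(\xi)$ near $\xi=0$.

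First I would discharge the region $y\notin t^{-1/2}\cD_t$ directly: there $G_t(t^{1/2}y)=0$, so $f_{k,t}(y)=0$, giving $|f_{k,t}(y)-\eta_{_\Sigma}(y)|\le\sup_{y'\notin\cD_t}\eta_{_\Sigma}(t^{-1/2}y')$, which is exactly the second error term in (\ref{result_final}); the thin boundary strip contributes at most $O(t^{d/2})\cdot O(t^{-(d+1)/2})=O(t^{-1/2})$ by the boundary bound in (AC\ref{AS0}). On the bulk $y\in t^{-1/2}\cD_t$, Fourier inversion reduces the claim to controlling
$$J_{k,t}(y):=\frac{1}{(2\pi)^d}\int_{\R^d}e^{-i\langle\zeta,y\rangle}\Bigl[\bigl(\widehat \cY_t(\zeta/\sqrt t)\mathbf{1}^\top\bigr)_k-e^{-\frac12\langle\Sigma\zeta,\zeta\rangle}\Bigr]\,d\zeta,$$
and I would split this into $\{\|\zeta\|\le\delta\sqrt t\}$, $\{\delta\sqrt t\le\|\zeta\|\le A\sqrt t\}$ and $\{\|\zeta\|\ge A\sqrt t\}$.

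On the small disc I invoke Kato perturbation theory applied to $\widehat \cY_1(\xi)$ (case $\T=\N$) or to the infinitesimal generator of $\{\widehat \cY_t(\xi)\}$ (case $\T=[0,\infty)$): for $\|\xi\|$ small, $\widehat \cY_t(\xi)=\lambda(\xi)^t\Pi(\xi)+R_t(\xi)$ with a simple dominant eigenvalue satisfying $\log\lambda(\xi)=-\frac12\langle\Sigma\xi,\xi\rangle+O(\|\xi\|^3)$ under $\M{3}$ and an exponentially contracting complement; substituting $\xi=\zeta/\sqrt t$ and expanding to cubic order reproduces the $O(t^{-1/2})$ i.i.d.-type Gaussian rate of \cite{SirSha65}. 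In the mid-range, the crucial input is a uniform spectral gap $\|\widehat \cY_{t_0}(\xi)\|\le\theta<1$ on $\delta\le\|\xi\|\le A$: at $\xi=0$ Assumption~\IP{} gives $1$ as a simple eigenvalue of $P_{t_0}$, and for $\xi\ne0$ any eigenvalue of modulus $1$ is ruled out by the presence of a non-degenerate a.c.\ part $\cG_{t_0}$ via a standard non-arithmeticity argument, compactness upgrading this to uniformity. On the far tail, (AC\ref{AS2}) delivers $\|\widehat \cY_{t_0}(\xi)\|_0\le\Gamma_{t_0}(\xi)+c\rho^{t_0}\le 1/2$ for $\|\xi\|$ large enough, whence iteration along the semigroup gives uniform exponential decay in $t$.

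The main obstacle is to convert these Fourier-side estimates into a genuine $L^\infty$ bound in $y$ (not merely an $L^1$ estimate) with the remainder in the precise form of (\ref{result_final}). For this I would integrate by parts once in each coordinate $\zeta_j$, exploiting the derivative bounds $\sup_{\cD_t}\|\partial_{y_j}G_t\|_0<\infty$ of (AC\ref{AS0}) to control the resulting volume integrals uniformly in $y$; the boundary contributions produced at $\partial\cD_t$ are controlled by $\sup_{\partial\cD_t}\|G_t\|_0=O(t^{-(d+1)/2})$ and, after multiplication by $t^{d/2}$ from the rescaling, absorbed into the $O(t^{-1/2})$ budget of (\ref{result_final}). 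Piecing together the three regional estimates with the $O(\rho^t)$ singular contribution and the off-bulk bound yields the theorem.
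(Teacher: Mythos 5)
Your skeleton matches the paper's: work on the Fourier side with the matrix semigroup $\{\widehat\cY_t(\zeta)\}$, split frequencies into $\|\zeta\|\le\delta\sqrt t$, $\delta\sqrt t\le\|\zeta\|\le A\sqrt t$, $\|\zeta\|\ge A\sqrt t$, and control these regions respectively by eigenvalue perturbation under $\M{3}$, a non-arithmeticity argument deduced from \IP{} and \textbf{(AC\ref{AS2})}, and the vanishing of $\Gamma_{t_0}$ at infinity — this is exactly the role of Lemmas~\ref{lem-dec-vp}, \ref{lem-non-ari} and \ref{lem-hat-g}, and your off-bulk/boundary-strip discussion parallels Proposition~\ref{pro-second-id}. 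The genuine gap is at the step you yourself call the main obstacle: converting these estimates into a sup-norm bound on the density. Your bulk representation $J_{k,t}(y)$ is an inverse Fourier integral over all of $\R^d$ of $(\widehat\cY_t(\zeta/\sqrt t)\mathbf 1^{\top})_k-e^{-\frac12\langle\Sigma\zeta,\zeta\rangle}$, and this integral need not converge: the singular part contributes $\widehat\mu_{k,t}$, which need not decay at infinity (and inverting $\widehat\cY_t$ would not give the a.c.\ density anyway), and even after replacing $\widehat\cY_t$ by $\widehat\cG_t$ no integrability is available — the far-tail bound of Lemma~\ref{lem-hat-g} (or your variant of it) is uniform in $\zeta$ but does not decay in $\zeta$, so $\int_{\|\zeta\|\ge A\sqrt t}|\widehat g_{k,t}(t^{-1/2}\zeta)|\,d\zeta$ is infinite, and under \textbf{(AC\ref{AS0})} $g_{k,t}$ is merely Lipschitz inside $\cD_t$ with a jump to $0$ across $\partial\cD_t$, so $\widehat g_{k,t}$ typically decays only like $\|\zeta\|^{-1}$ (as for the indicator of a convex body), which is not absolutely integrable for any $d\ge1$.

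Your proposed repair — one integration by parts per coordinate using the first-derivative bounds of \textbf{(AC\ref{AS0})} — cannot close this gap: a single integration by parts yields at best $O(1/|\zeta_j|)$ decay, with constants proportional to $\int_{\cD_t}\|\partial_{y_j}G_t\|_0\,dy$ and to boundary integrals over $\partial\cD_t$, quantities that grow with $t$ (in the local-time application $\cD_t$ has diameter of order $t$), and \textbf{(AC\ref{AS0})} provides no higher derivatives; you have not tracked these growing factors against the $O(t^{-1/2})$ target, and the resulting integrand is still not absolutely integrable. The paper circumvents the problem with a smoothing device rather than decay of $\widehat g_{k,t}$: it compares $f_{k,t}$ with $\eta_{p,I}\star f_{k,t}$ for the narrow Gaussian $\eta_{p,I}(y)=p^d\eta_I(py)$ with $p=p_t:=\lfloor t^{d+3/2}\rfloor+1$, applies Fourier inversion only to the smoothed term $K_{2,p_t}$ (the factor $\widehat\eta_I(p_t^{-1}\zeta)$ makes the far-tail integral finite, of size $O(p_t^{\,d})$, which the $t\,2^{-t/t_0}$ bound of Lemma~\ref{lem-hat-g} absorbs), and controls the de-smoothing errors $K_{1,p_t},K_{3,p_t}$ by Taylor's inequality and the $O(t^{(d+1)/2})$ Lipschitz bound of Lemma~\ref{Bornes_F}; this is precisely why the bulk must be shrunk to $\cE'_t$ in (\ref{def-D't}), the complement being handled by the elementary Proposition~\ref{pro-second-id}. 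Without this regularization (or an equivalent one) your argument does not yield the claimed $L^\infty$ estimate, so the proposal is incomplete at its central step.
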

\begin{rem}[Non-lattice condition] \label{rque-NL}
To derive a LLT, it  is standard to assume some non-lattice condition which writes for the MAP $\{(X_t,Y_t)\}_{t\in \T}$ as (e.g~see \emph{\cite{FerHerLed12}}): 

\medskip

\noindent \emph{\NL} : {\it  there is no triplet $(a,F,\theta)$  with $a\in\R^d$, $F$ a closed subgroup of $\R^d$,  $F\neq \R^d$ and $\beta\, :\, \X\r\R^d$ such that: 
$\forall k\in\X,\quad Y_1 + \beta(X_1) - \beta(k)\ \in\ a + F\ \ \P_{k}\text{-a.s.}$.}

\medskip 

\noindent Actually Condition \emph{\NL} holds 
under \emph{\IP} and  \emph{\textbf{(AC\ref{AS2})}} (see the proof of Lemma~\ref{lem-non-ari}). Moreover, when $\{Y_t\}_{t\geq0}$ is an additive functional of $\{X_t\}_{t\geq0}$, the  matrix $\Sigma$ in the CLT for $\{t^{-1/2}Y_t\}_{t>0}$ is  invertible under \emph{\NL}. Indeed, if not, there is $\zeta_0\in\R^{N}$ such that $\langle \zeta_0,\Sigma\zeta_0\rangle =0$. Thus,  $\{t^{-1/2}\langle \zeta_0,Y_t\rangle\}_{t>0}$ converges in distribution to $0$ (which is a degenerate Gaussian random variable).  
It follows from \emph{\cite[Th.~2.2]{HitShi70}} that, for every $k\in\X$, $\langle \zeta_0,Y_1\rangle = b(X_1) - b(k)\ \P_k$-a.s. for some $b : \X\r\R$. But this is impossible under~\emph{\NL}.  
\end{rem}
\begin{rem}[A uniform LLT with respect to transition matrix $P$] \label{main-unif}
Let $\cP$ denote the set of irreducible and aperiodic stochastic $N\times N$-matrices, equipped with the topology associated with (for instance) the distance $d(P,P') := \|P-P'\|_{0}\, $ ($P,P'\in\cP$). 
Assume that there is some compact subset $\cP_0$ of $\cP$ such that $P:=P_1 \in \cP_0$, where $\{P_t\}_{t\in\T}$ is the transition semi-group of $\{X_t\}_{t\in\T}$. 
To keep in mind the dependence on $P$, the positive measures $\cY_{k,\ell,t}$ in (\ref{loi-Yt-kl}) and the matrix $\Sigma$ in Theorem~\ref{lem-clt} are denoted by $\cY_{k,\ell,1}^{P}$ and $\Sigma^P$ respectively. Finally let $(\mathfrak{M},d_{TV})$ be the space of bounded positive measures on  $\R^d$ equipped with the total variation distance $d_{TV}$ (i.e.~$\forall (\mu,\mu')\in \mathfrak{M}^2,\ d_{TV}(\mu,\mu') := \sup\{|\mu(f)-\mu'(f)|,\, |f|\leq1\}$. Let us assume that the centered MAP $\{(X_t,Y_t)\}_{t\in \T}$ satisfies the following conditions:
\begin{assu}: \label{U1}
For any $(k,\ell)\in \X^2$, the map $P\mapsto \cY_{k,\ell,1}^{P}$ is continuous from $(\cP_0,d)$ into $(\mathfrak{M},d_{TV})$. 
\end{assu}
\begin{assu}: \label{U4} 
There exist positive constants $\alpha$ and $\beta$ such that 
\begin{equation} \label{alpha-beta} 
\forall P\in\cP_0,\ \forall \zeta\in\R^d,\quad \alpha\, \|\zeta\|^2 \leq \langle \zeta,\Sigma^P \zeta \rangle \leq \beta\, \|\zeta\|^2. 
\end{equation} 
\end{assu}
\begin{assu}: \label{U3} 
The conditions $\M{3}$, \emph{\textbf{(AC\ref{AS2})}} and \emph{\textbf{(AC\ref{AS0})}} hold uniformly in $P\in \cP_0$.
\end{assu}
Then, under Assumptions~\emph{\textbf{(U\ref{U1})-(U\ref{U3})}}, for every $k\in \X$, the density $f_{k,t}^{P}(\cdot)$ of the a.c.~part of the probability distribution of $t^{-1/2}Y_t$ under $\P_k$ satisfies the following 
property when $t\r+\infty$: 
$$\sup_{P\in{\cal P}_0} \sup_{y\in\R^d}\big|f_{k,t}^{P}(y) - \eta_{_{\Sigma^P}}(y)\big| = O(t^{-1/2}) + O\big(\sup_{P\in{\cal P}_0}\sup_
{y\notin {\cal D}_t}\eta_{_{\Sigma^P}}(t^{-1/2}y) \big).$$
This result follows from a suitable adaptation of the proof of Theorem~\emph{\ref{main-id}} (see Remark~\emph{\ref{rque-unif}}).
\end{rem}
%========================
%========================
\section{Application to the local times of a jump process} \label{appli-local-times}
%========================
%========================

Let $\{X_t\}_{t\ge 0}$ be a Markov jump process  with finite state space $\X:=\{1,\ldots,N\}$ and generator $G$. Its transition semi-group is given by: $\forall t\ge 0,\ P_t := e^{tG}$. The local time $L_t(i)$ associated with state $i\in\X$, or the sojourn time in state $i$ on the interval $[0,t]$,  is defined by 
	\[ \forall t\ge 0, \quad L_t(i):= \int_0^t 1_{\{X_s=i\}} \, ds.
\]
It is well known that $L_t(i)$ is an additive functional of $\{X_t\}_{t\ge 0}$ and that $\{(X_t,L_t(i))\}_{t\ge 0}$ is an MAP. Consider the MAP $\{(X_t,L_t)\}_{t\ge 0}$ where $L_t$ is the random vector of the local times 
$L_t:= (L_t(1),\ldots,L_t(N))$. 
 Note that, for all $t>0$, we have $\langle L_t, \mathbf{1}\rangle =t$, that is $L_t$ is $\cS_t-$valued where 
$$\cS_t := \big\{ y\in [0,+\infty)^N: \langle y, \mathbf{1}\rangle =t\big\}.$$ 
Assume that  $G$ is irreducible. Then Condition \IP~holds true and $\{X_t\}_{t\ge 0}$ has a unique invariant probability measure $\pi$. Set $m= (m_1,\ldots,m_N):=\E_{\pi}[L_1]$, and define the $\cS_{t}^{(0)}-$valued centered r.v. $$Y_t = L_t - tm$$ where $\cS_{t}^{(0)}:=T_{-tm}(\cS_t)$ with the translation $T_{-tm}$ by vector $-tm$ in $\R^N$. Note that  $\cS_{t}^{(0)}$ is a subset of the hyperplane $H$ of $\R^N$ defined by  
\begin{equation} \label{Def_H_Local}
H:=\big\{y\in\R^N: \ \langle y, \mathbf{1}\rangle = 0\big\}.
\end{equation}
We denote by $\Lambda$ the bijective map from $H$ into $\R^{N-1}$ defined by $\Lambda(y):=(y_1,\ldots,y_{N-1})$ for $y:=(y_1,\ldots,y_N)\in H$. Recall that $\ell_{N-1}$ denotes the Lebesgue measure on $\R^{N-1}$. Let $\nu$ be the measure defined on $(H,B(H))$ as the image measure of $\ell_{N-1}$ under $\Lambda^{-1}$, where $B(H)$ stands for the Borelian $\sigma$-algebra on $H$. Finally, under the probability measure $\P_k$, $f_{k,t}$ is  
the density  of the a.c.~part of the probability distribution of $t^{-1/2}Y_t = t^{-1/2}(L_t-tm)$ with respect to the measure $\nu$ on $H$. 
\begin{pro} \label{prop-local-times}
If $G$ and the sub-generators $G_{i^ci^c}:=(G(k,\ell))_{k,\ell\in\{i\}^c}, \,i=1,\ldots,N$ are irreducible, then there exists a definite-positive $(N-1)\times(N-1)$-matrix $\Sigma$ such that:
$$\forall k\in \X, \qquad\sup_{h \in H} \big|f_{k,t}(h) - (2\pi)^{-(N-1)/2}(\det \Sigma)^{-1/2} e^{-\frac{1}{2}\langle \Lambda h^{\top},\Sigma \Lambda h^{\top} \rangle}\big| = O\big(t^{-1/2}\big).$$
\end{pro}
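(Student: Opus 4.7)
The plan is to deduce Proposition~\ref{prop-local-times} from Theorem~\ref{main-id} in dimension $d=N-1$. Since $Y_t=L_t-tm$ is valued in the hyperplane $H\subset\R^N$, I push forward by the bijection $\Lambda$ and work with the centered MAP $\{(X_t,\tilde Y_t)\}_{t\ge 0}$ on $\X\times\R^{N-1}$, where $\tilde Y_t:=\Lambda(Y_t)$; it is still an MAP by Lem.~C.1 of [HL], and the density $f_{k,t}$ with respect to $\nu$ is precisely the Lebesgue density of $t^{-1/2}\tilde Y_t$ under $\P_k$, composed with $\Lambda$. The stated conclusion is then the content of Theorem~\ref{main-id} applied to this MAP, provided its hypotheses hold and provided the remainder $\sup_{y\notin\cD_t}\eta_{_\Sigma}(t^{-1/2}y)$ in~(\ref{result_final}) is absorbed in $O(t^{-1/2})$. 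This last point will be free: $\overline\cD_t$ will turn out to be (the $\Lambda$-image of) the translated simplex $\cS_t^{(0)}=\cS_t-tm$, a convex set of diameter $O(t)$ containing the origin, so $y\notin\cD_t$ forces $\|t^{-1/2}y\|\gtrsim\sqrt t$ and the Gaussian tail is $O(e^{-ct})$.

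Assumption \IP~follows from the irreducibility of $G$ (which makes $e^G$ a positive matrix), and $\M{3}$ is trivial since $\|\tilde Y_v\|$ is bounded by a deterministic constant for $v\in(0,1]$. The invertibility of $\Sigma$ is obtained via Remark~\ref{rque-NL}: $\tilde Y_t$ is an additive functional of $\{X_t\}$, so once \NL~is verified the conclusion of that remark applies. The irreducibility of every sub-generator $G_{i^c i^c}$ enters here to rule out any coboundary relation $\langle\zeta_0,\tilde Y_1\rangle=b(X_1)-b(X_0)$ for $\zeta_0\ne 0$: for any fixed starting state $k$ it produces pairs of distinct excursions from $k$ back to $k$ avoiding any prescribed state, and since the associated holding times have strictly positive densities, the resulting local-time profiles take a continuum of values along every direction $\zeta_0$, which is incompatible with such a coboundary.

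The main work is to verify \textbf{(AC\ref{AS2})} and \textbf{(AC\ref{AS0})} from the explicit structure of $(X_t,L_t)$ under $\P_k$. I decompose according to the number $N_t$ of jumps in $[0,t]$. On $\{N_t=0\}$ the process stays in $k$, contributing to $\cM_t$ the Dirac mass $e^{-|G(k,k)|t}\mathbf{1}_{\{\ell=k\}}\delta_{te_k}$, which at once gives the exponential bound~(\ref{masse-id}). On $\{N_t\ge 1\}$, conditioning on the visited path $(k_0=k,k_1,\ldots,k_n=\ell)$ expresses $L_t$ as an affine function of independent exponential holding times constrained by $\sum_i\tau_i=t$; a standard convolution computation then writes $g_{k,\ell,t}$ as a finite sum of pieces, each a polynomial in $y$ times a product of exponentials $e^{G(k_i,k_i)L_t(k_i)}$, compactly supported on a simplicial piece of the $\Lambda$-image of $\cS_t^{(0)}$. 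From this explicit form I extract the uniform boundedness, continuity and differentiability on $\cD_t$ required by~\textbf{(AC\ref{AS0})}, the boundary bound $\|G_t(y)\|_0=O(t^{-(d+1)/2})$ (holding in fact with exponential decay since $\partial\cD_t$ lies in the deviation regime $\|y\|\sim t$), and the Fourier decay~(\ref{fourier-id}) via Riemann-Lebesgue applied to the $L^1$ density, uniformly for $w\in[t_0,2t_0]$ by continuity of $w\mapsto G_w$ in $L^1$. The main obstacle is precisely this path-by-path bookkeeping uniformly in $t$ and in $(k,\ell)$; once completed, Theorem~\ref{main-id} delivers the stated conclusion.
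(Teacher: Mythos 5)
Your overall reduction is the same as the paper's: push $Y_t$ forward by $\Lambda$, apply Theorem~\ref{main-id} to the $(N-1)$-dimensional MAP, get invertibility of $\Sigma$ from Remark~\ref{rque-NL}, and absorb the term $\sup_{y\notin\cD_t}\eta_{_\Sigma}(t^{-1/2}y)$ using the explicit form (\ref{Def_Dt}) of $\cD_t$ (your exponential-tail argument for that last point is fine, as are \IP\ from irreducibility of $G$ and $\M{3}$ from the deterministic bound $\|Y_v\|\leq$ const). The gap is in your verification of \textbf{(AC\ref{AS2})}--\textbf{(AC\ref{AS0})}, which is where the real content of the proposition lies. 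Your identification of the singular part $\cM_t$ with the no-jump Dirac contribution is wrong for $N\geq 3$: the law of $Y'_t=\Lambda(Y_t)$ is singular on the whole event that some state is unvisited during $[0,t]$, since $L_t(i)=0$ forces $Y'_t$ onto the affine hyperplane $\{y'_i=-m_it\}$ (and an unvisited state $N$ forces $\langle y',\mathbf{1}\rangle=m_Nt$), all of which occur with positive probability on $\{N_t\geq1\}$. Hence the bound (\ref{masse-id}) must control the avoidance probabilities $\P_k\{L_t(i)=0\}$, and this is precisely where the hypothesis that the sub-generators $G_{i^ci^c}$ are irreducible is used in the paper (see the remark following the proposition). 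Your proposal instead spends that hypothesis on a direct excursion argument for the non-lattice condition, which is both misplaced and unnecessary: the paper obtains \NL\ for free from \IP\ and \textbf{(AC\ref{AS2})} via Remark~\ref{rque-NL} and Lemma~\ref{lem-non-ari}.

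The second problem is that the regularity package \textbf{(AC\ref{AS0})} and the uniform Fourier decay (\ref{fourier-id}) are not actually established. Conditioning on the jump path does not give a ``finite sum'': $g_{k,\ell,t}$ is an infinite series over the number of jumps, and the uniform-in-$t$ bounds (sup bound on $G_t$, the boundary estimate $O(t^{-(d+1)/2})$, the uniformly bounded partial derivatives, and the uniformity in $w\in[t_0,2t_0)$ of the Riemann--Lebesgue decay) are exactly the delicate part; you acknowledge this yourself by leaving the ``path-by-path bookkeeping uniformly in $t$'' as an open obstacle, and your claimed exponential boundary decay is a large-deviation heuristic, not a proof. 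The paper discharges all of this by invoking Sericola's results on occupation times together with Section~5 of the companion paper [HL]; as it stands, your argument neither proves these conditions nor correctly locates where the sub-generator assumption enters, so the proposal does not yet constitute a proof of the proposition.
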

\begin{proof}{}
Let $Y'_t$ be the $(N-1)$-dimensional random vector  
$\Lambda(Y_t) = (Y_t(1),\ldots,Y_t(N-1))$. 
Then $\{(X_t,Y'_t)\}_{t\ge 0}$ is a $\X\times\overline{\cD}_t-$valued MAP where $\cD_t$ is the following open convex of $\R^{N-1}$ 
\begin{equation} \label{Def_Dt}
	\cD_t := \big\{ y'\in \R^{N-1} : j=1,\ldots, N-1, \ y'_j \in (-m_j t, (1-m_j)t), \ \langle y',\mathbf{1}\rangle < m_N t \big\}.
\end{equation} 
Using the assumptions on $G$ and results of \cite{Ser00}, the MAP $\{(X_t,Y'_t)\}_{t\ge 0}$ satisfies the conditions $\M{3}$ and \textbf{(AC1)-(AC2)} (see details in Section~5 of [HL]). The matrix $\Sigma$ in the CLT for $\{t^{-1/2}Y'_t\}_{t>0}$ is invertible from Remark~\ref{rque-NL} and  $O(\sup_{y\notin {\cal D}_t}\eta_{_\Sigma}(t^{-1/2}y)) = O(t^{-1/2})$ from (\ref{Def_Dt}). Thus Theorem~\ref{main-id} gives that 
$$\forall k\in\X,\quad \sup_{y' \in \R^{N-1}} \big|f'_{k,t}(y') - (2\pi)^{-(N-1)/2}(\det \Sigma)^{-1/2} e^{-\frac{1}{2}\langle {y'}^{\top},\Sigma {y'}^{\top} \rangle}\big| = O\big(t^{-1/2}\big),$$
where $f'_{k,t}$ is the density of the a.c.~part of the probability distribution of $t^{-1/2}Y'_t$. Finally, using the bijection $\Lambda$, it is easily seen that the density $f_{k,t}$ is given by: $\forall h\in H,\ f_{k,t}(h) := f'_{k,t}(\Lambda h)$. This gives the claimed result. 
\end{proof}
\begin{rem} First, the assumption on the sub-generators of $G$ in Proposition~\ref{prop-local-times} is used to obtain the geometric convergence to 0 of the mass of the singular part of the probability distribution of $Y'_t$ required in \emph{\textbf{(AC\ref{AS2})}} (see Section~5 of [HL] for details). Second, it can be seen from \emph{\cite{Pin91}} that $\Sigma$ is the  definite-positive matrix  
	\[ \forall (i,j)\in\{1,\ldots,N-1\}^2, \quad \Sigma_{i,j} = \pi_i C_{i,j} + \pi_j C_{j,i} \quad \text{ where } \quad C:=-\int_0^{\infty} (e^{tG} - \mathbf{1}^{\top} \pi)\, dt. 
\]
\end{rem}

%===========
%===========
\section{Fourier analysis and proof of Theorem~\ref{main-id}}
%===========
%===========
Assume that the conditions of Theorem~\ref{main-id} hold. Note that $\{(X_t,\Sigma^{-1/2} Y_t)\}_{t\in\T}$ is an MAP with the identity matrix $I$ as asymptotic covariance matrix. It still satisfies the assumptions of Theorem~\ref{main-id} since the associated bounded positive measures of (\ref{Leb-kl}) are the image measure of $\cG_{k,\ell,t}$ and $\cM_{k,\ell,t}$ under the bijective linear map $\Sigma^{-1/2}$. 
Thus, we only have to prove  Theorem~\ref{main-id} when $\Sigma=I$ . This proof 
 involves Fourier analysis as in the i.i.d.~case. In our Markov context, this study is based on the semi-group property of the matrix family $\{\widehat\cY_t(\zeta)\}_{t\in\T}$ for every $\zeta\in\R^d$ which allows us to analyze the characteristic function of $Y_t$. In the next subsection,  we provide a collection of lemmas which highlights the connections between the assumptions of Section~\ref{sec-ass-stat} and the behavior of this semi-group.
This is the basic material for the derivation of Theorem~\ref{main-id} in Subsection~\ref{Main}.

Let us denote the integer part of any $t\in\T$ by $\lfloor t \rfloor$. 
Using the notation of (\ref{Leb-kl}), the bounded positive measure $\sum_{\ell=1}^N \cG_{k,\ell,t}$ with density 
$g_{k,t}:=\sum_{\ell=1}^N g_{k,\ell,t}$ 
is the a.c.~part of the probability distribution of $Y_t$ under $\P_k$, while $\mu_{k,t}:=\sum_{\ell=1}^N\mu_{k,\ell,t}$ is its singular part. 
That is, we have for any $k\in\X$ and $t>0$: 
\begin{equation} \label{loi-Yt}
\forall B\in B(\R^d),\quad \P_k\{Y_t\in B\} = \int_B g_{k,t}(y)\, dy + \mu_{k,t}(1_B).
\vspace*{-3mm}
\end{equation}
%=========================
\subsection{Semigroup of Fourier matrices and basic lemmas}
%=========================
 The bounded positive measure  $\cY_t$ is defined in (\ref{def-meas-mat}) and its Fourier transform $\widehat\cY_t$ is (see (\ref{def-Four-mat})): 
\begin{equation} \label{Fourier}
\forall (t,\zeta)\in\T\times\R^d,\ \forall (k,\ell)\in \X^2,\quad \big(\widehat\cY_t(\zeta)\big)_{k,\ell} = \E_{k}\big[1_{\{X_t=\ell\}}\, e^{i \langle \zeta , Y_t \rangle}\big]. 
\end{equation}
Note that $\widehat\cY_t(0) = P_t$. From the additivity of the second component $Y_t$, we know that  $\{\widehat\cY_t(\zeta)\}_{t\in\T,\zeta\in\R^d}$ is a semi-group of matrices (e.g. see \cite{FerHerLed12} for details), that is 
\begin{equation} \label{semi-group}
\forall \zeta\in\R^d,\ \forall(s,t)\in\T^2,\quad \widehat\cY_{t+s}(\zeta) = \widehat\cY_t(\zeta)\, \widehat\cY_s(\zeta). \tag{SG}
\end{equation}
In particular the following property holds true
\begin{equation} \label{semi-gpe-discret}
\forall \zeta\in\R^d,\ \forall n\in\N,\qquad \widehat\cY_n(\zeta) := \big(\E_{k}[1_{\{X_n=\ell\}}\, e^{i \langle \zeta , Y_n \rangle}]\big)_{(k,\ell)\in\X^2} = \widehat\cY_1(\zeta)^n. 
\end{equation}
For every $k\in\X$ and $t\in\T$, we denote by $\phi_{k,t}$ the characteristic function of $Y_t$ under $\P_k$: 
\begin{equation} \label{phi-Y}
\forall \zeta\in\R^d,\quad \phi_{k,t}(\zeta) := \E_{k}\big[e^{i \langle \zeta , Y_t \rangle}\, \big] = e_k \widehat\cY_t(\zeta) \mathbf{1}^{\top},
\end{equation}
where $e_k$ is the $k$-th vector of the canonical basis of $\R^d$. 

Under Conditions \IP and $\M{2}$, the spectral method provides the CLT for $\{(t^{-1/2}Y_t)\}_{t\in\T}$. To extend this CLT to a local limit theorem, a precise control of the characteristic function $\phi_{k,t}$ is  needed.  This is the purpose of the next three lemmas, in which the semi-group property (\ref{semi-group}) plays an important role. The first lemma, which is the central part in the proof of Theorem~\ref{main-id},  provides the control of $\phi_{k,t}$ on a ball $B(0,\delta):= \{\zeta\in\R^d : \|\zeta\| < \delta\}$ for some $\delta>0$. The second one is on the control of $\phi_{k,t}$ on the annulus $\{\zeta\in\R^d : \delta\leq\|\zeta\|\leq A]$ for any $A>\delta$. Finally the third lemma focuses on the Fourier transform of the density $g_{k,\ell,t}$ of the a.c. part $\cG_{k,\ell,t}$ of $\cY_{k,\ell,t}$ in (\ref{Leb-kl}) on the domain $\{\zeta\in\R^d :\|\zeta\|\geq A\}$ for some $A>0$.  
\begin{lem} \label{lem-dec-vp}
Under Assumptions~\emph{\IP}~and $\M{3}$, there exists a real number $\delta>0$ 
such that, for all $\zeta\in B(0,\delta)$, the characteristic function of $Y_t$ satisfies 
\begin{equation} \label{exp-val-pert}
\forall k\in\X,\ \forall t\in\T,\quad \phi_{k,t}(\zeta)  = \lambda(\zeta)^{\lfloor t \rfloor}\, L_{k,t}(\zeta)  + R_{k,t}(\zeta),
\end{equation}
with $\C-$valued functions $\lambda(\cdot)$, $L_{k,t}(\cdot)$ and $R_{k,t}(\cdot)$ on $B(0,\delta)$ satisfying the next properties for $t\in\T$, $\zeta\in\R^d$ and $k\in\X$: 
\begin{subequations}
\begin{eqnarray}
 \|\zeta\| < \delta\ &\Rightarrow & \lambda(\zeta) = 1 - \|\zeta\|^2/2 + O(\|\zeta\|^3) \label{exp-val-pert0} \\
 t\geq 2,\ \|t^{-1/2}\zeta\| < \delta & \Rightarrow &  \big|\lambda(t^{-1/2}\zeta)^{\lfloor t \rfloor} - e^{-\|\zeta\|^2/2}\big| \leq t^{-1/2} C\,(1+\|\zeta\|^3)e^{-\|\zeta\|^2/8} \label{exp-val-pert0-bis} \\
 \|\zeta\| < \delta & \Rightarrow &\big|L_{k,t}(\zeta)-1\big| \leq C\, \|\zeta\|  \label{exp-val-pert1} \\
 \exists\, r\in(0,1), & &  \sup\{|R_{k,t}(\zeta)|: \|\zeta\|\leq \delta \} \leq C\, r^{\lfloor t \rfloor}  \label{exp-val-pert2}
\end{eqnarray}
\end{subequations}
where the constant $C>0$ in \emph{(\ref{exp-val-pert0-bis})}-\emph{(\ref{exp-val-pert2})} only depends on $\delta$ and on $M_3$ in $\M{3}$ (see \emph{(\ref{moment-alpha})}). 
\end{lem}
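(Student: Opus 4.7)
The strategy is the standard Nagaev--Guivarc'h spectral method: treat $\zeta\mapsto \widehat\cY_1(\zeta)$ as a perturbation of $P=\widehat\cY_1(0)$, transfer the resulting rank-one spectral decomposition to $\widehat\cY_t(\zeta)$ via the semigroup identity~(\ref{semi-group}), and then contract with $e_k$ and $\mathbf{1}^\top$ in $\phi_{k,t}(\zeta)=e_k\widehat\cY_t(\zeta)\mathbf{1}^\top$ to read off $\lambda$, $L_{k,t}$, $R_{k,t}$. Under \IP, $P$ has $1$ as a simple dominant eigenvalue with spectral projection $\Pi_0=\mathbf{1}^\top\pi$, and the rest of its spectrum inside a disc $\{|z|\le r_0\}$ with $r_0<1$. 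Under $\M{3}$, differentiation under the expectation in (\ref{Fourier}) shows $\zeta\mapsto \widehat\cY_1(\zeta)$ is of class $C^3$ on $\R^d$. Standard perturbation theory for isolated simple eigenvalues then produces $\delta>0$ and $C^3$ maps $\lambda(\cdot)$, $\Pi(\cdot)$ on $B(0,\delta)$ with $\lambda(0)=1$, $\Pi(0)=\Pi_0$, $\Pi(\zeta)^2=\Pi(\zeta)$ of rank one, such that $N(\zeta):=\widehat\cY_1(\zeta)-\lambda(\zeta)\Pi(\zeta)$ satisfies $\Pi(\zeta)N(\zeta)=N(\zeta)\Pi(\zeta)=0$ and has spectral radius at most some $r\in(r_0,1)$ uniformly on $\overline{B(0,\delta)}$ (after shrinking $\delta$ if needed).

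By (\ref{semi-group}) one has $\widehat\cY_t(\zeta)=\widehat\cY_1(\zeta)^{\lfloor t\rfloor}\widehat\cY_{t-\lfloor t\rfloor}(\zeta)$, reducing to (\ref{semi-gpe-discret}) in discrete time. The algebraic orthogonality $\Pi(\zeta)N(\zeta)=N(\zeta)\Pi(\zeta)=0$ yields
\[
\widehat\cY_1(\zeta)^{\lfloor t\rfloor}=\lambda(\zeta)^{\lfloor t\rfloor}\Pi(\zeta)+N(\zeta)^{\lfloor t\rfloor},
\]
and substitution into (\ref{phi-Y}) gives the announced decomposition (\ref{exp-val-pert}) with
\[
L_{k,t}(\zeta):=e_k\Pi(\zeta)\widehat\cY_{t-\lfloor t\rfloor}(\zeta)\mathbf{1}^\top,\qquad R_{k,t}(\zeta):=e_k N(\zeta)^{\lfloor t\rfloor}\widehat\cY_{t-\lfloor t\rfloor}(\zeta)\mathbf{1}^\top.
\]

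The three routine estimates follow quickly. For (\ref{exp-val-pert0}), I Taylor-expand $\lambda$ at $0$: the first-order term vanishes because the MAP is centered, the Hessian becomes $-\tfrac{1}{2}\|\zeta\|^2$ after the reduction $\Sigma=I$, and the cubic remainder is uniformly controlled by $M_3$. For (\ref{exp-val-pert1}) one checks $L_{k,t}(0)=e_k\Pi_0\, P_{t-\lfloor t\rfloor}\mathbf{1}^\top=e_k\mathbf{1}^\top=1$ (using $P_s\mathbf{1}^\top=\mathbf{1}^\top$ and $\pi\mathbf{1}^\top=1$), then Lipschitz-controls $L_{k,t}(\zeta)-1$ from the differentiability of $\Pi$ at $0$ combined with the uniform estimate $\|\widehat\cY_s(\zeta)-\widehat\cY_s(0)\|_0\le\|\zeta\|\,M_2^{1/2}$ for $s\in(0,1]\cap\T$ (Cauchy--Schwarz and $\M{2}$, implied by $\M{3}$). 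For (\ref{exp-val-pert2}), Gelfand's formula and compactness of $\overline{B(0,\delta)}$ give $\|N(\zeta)^n\|_0\le C r^n$ uniformly on $B(0,\delta)$, and $\|\widehat\cY_{t-\lfloor t\rfloor}(\zeta)\mathbf{1}^\top\|$ is trivially bounded.

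The delicate step is the Gaussian-window estimate (\ref{exp-val-pert0-bis}), which I would treat by splitting on the size of $\|\zeta\|$. Shrinking $\delta$ so that (\ref{exp-val-pert0}) implies $|\lambda(\eta)|\le 1-\|\eta\|^2/4$ on $\|\eta\|<\delta$, one gets $|\lambda(t^{-1/2}\zeta)|^{\lfloor t\rfloor}\le e^{-\|\zeta\|^2/8}$ for $t\ge 2$, and $e^{-\|\zeta\|^2/2}\le e^{-\|\zeta\|^2/8}$ trivially. Taking logarithms in (\ref{exp-val-pert0}) yields
\[
\lfloor t\rfloor\log\lambda(t^{-1/2}\zeta)=-\tfrac{\|\zeta\|^2}{2}+O\!\bigl((1+\|\zeta\|^3)/\sqrt t\bigr)\quad(t\ge 2).
\]
When $\|\zeta\|^3\le\sqrt t$ the remainder is $O(1)$, so $|e^x-1|\le e^{|x|}|x|$ gives the required bound with envelope $e^{-\|\zeta\|^2/2}\le e^{-\|\zeta\|^2/8}$. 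When $\|\zeta\|^3>\sqrt t$, the right-hand side of (\ref{exp-val-pert0-bis}) is itself $\ge 2e^{-\|\zeta\|^2/8}$ for a suitably large $C$, and hence dominates both $|\lambda(t^{-1/2}\zeta)^{\lfloor t\rfloor}|$ and $e^{-\|\zeta\|^2/2}$ directly. The only real care required is choosing constants so that everything fits inside the envelope $(1+\|\zeta\|^3)e^{-\|\zeta\|^2/8}$.
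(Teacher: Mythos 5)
Your proposal is correct and follows essentially the same route as the paper: the Nagaev--Guivarc'h perturbation of $\widehat\cY_1(\zeta)$ around $P$, the decomposition $\widehat\cY_1(\zeta)^{\lfloor t\rfloor}=\lambda(\zeta)^{\lfloor t\rfloor}\Pi(\zeta)+N(\zeta)^{\lfloor t\rfloor}$, and the passage to continuous time via $\phi_{k,t}(\zeta)=e_k\widehat\cY_1(\zeta)^{\lfloor t\rfloor}\widehat\cY_{t-\lfloor t\rfloor}(\zeta)\mathbf{1}^\top$ with moment-controlled regularity in the fractional part. The only difference is that you spell out the estimates (in particular the Gaussian-window bound (\ref{exp-val-pert0-bis}) via the $\|\zeta\|^3\lessgtr\sqrt t$ split) which the paper delegates to \cite{HenHer01}, \cite{FerHerLed12} and [HL], and your treatment of these is sound.
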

\begin{proof}{}
Assumption~\IP~ensures that $\widehat\cY_1(0) = P_1 = \Pi + N$, where $\Pi:=(\Pi_{i,j})_{(i,j)\in \X}$ is the rank-one matrix defined by 
$\Pi_{i,j}:=\pi(i)$ and $N\in\cM_N(\C)$ is such that $\|N^n\|_0 = O(\kappa^n)$ for some $\kappa\in(0,1)$. Moreover note that the map $\zeta\mapsto \widehat\cY_1(\zeta)$ is thrice-differentiable from $\R^d$ into $\cM_N(\C)$ due to (\ref{Fourier}) and $\M{3}$. Then the standard  perturbation theory shows that, for any $r\in(\kappa,1)$, there exists $\delta\equiv\delta(r)>0$ such that, for all $\zeta\in B(0,\delta)$, $\widehat\cY_1(\zeta)^n = \lambda(\zeta)^n\Pi(\zeta) + N(\zeta)^n$ where $\lambda(\zeta)$ is the dominating eigenvalue of $\widehat\cY_1(\zeta)$, $\Pi(\zeta)$ is the associated rank-one eigen-projection, and $N(\zeta)\in\cM_N(\C)$ is such that $\|N(\zeta)^n\|_0 = O(r^n)$. Moreover the maps $\lambda(\cdot)$, $\Pi(\cdot)$, and $N(\cdot)$ are thrice-differentiable on $B(0,\delta)$. These properties and (\ref{phi-Y}) provide all the conclusions of Lemma~\ref{lem-dec-vp} for $t\in\N$ (see \cite[Prop.~VI.2]{HenHer01} for details). For Properties (\ref{exp-val-pert0}), (\ref{exp-val-pert1}) and (\ref{exp-val-pert2}), the passage to the continuous-time case $\T:=[0,+\infty)$ can be easily derived from 
$\phi_{k,t}(\zeta) = 
e_k \widehat\cY_1(\zeta)^{\lfloor t \rfloor} \widehat\cY_v(\zeta) \mathbf{1}^{\top}$ 
with $v:=t-\lfloor t \rfloor\in[0,1)$ 
and the fact that $\zeta\mapsto \widehat\cY_v(\zeta)$ is thrice-differentiable from $\R^d$ into $\cM_N(\C)$ with  partial derivatives uniformly bounded in $v\in(0,1]$. 
See \cite[Prop.~4.4]{FerHerLed12} for details. For (\ref{exp-val-pert0-bis}), the passage to $\T:=[0,+\infty)$ is an easy extension of Inequality (v) of \cite[Prop.~VI.2]{HenHer01}) (see details in [HL]). 
\end{proof}
\begin{lem} \label{lem-non-ari} 
Assume that Conditions~\emph{\IP}, \emph{\textbf{(AC1)}} and $\M{\alpha}$ for some $\alpha>0$ hold. Let $\delta,A$ be any real numbers such that $0<\delta<A$. There exist constants $D\equiv D(\delta,A)>0$ and $\tau\equiv \tau(\delta,A)\in(0,1)$ such that 
\begin{equation} \label{ineg-non-ari}
\forall k\in\X,\ \forall t\in\T,\quad \sup\big\{|\phi_{k,t}(\zeta)|: \delta\leq\|\zeta\|\leq A\big\} \leq D\, \tau^{\lfloor t \rfloor}.
\end{equation}
\end{lem}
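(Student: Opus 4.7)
The plan is to exploit the semi-group property~(\ref{semi-group}) to reduce the estimate to a spectral radius bound on $\widehat\cY_1(\zeta)$ over the compact annulus $\{\delta\le\|\zeta\|\le A\}$, and to extract that bound from the non-lattice condition~\NL, which itself must first be established under~\IP and~\textbf{(AC\ref{AS2})}.

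To prove~\NL, I would argue by contradiction. If a triple $(a,F,\beta)$ existed as in~\NL, then iterating the identity $Y_1+\beta(X_1)-\beta(X_0)\in a+F$ $\P_k$-a.s.\ along the Markov chain (using that $\{(X_j,Y_j-Y_{j-1})\}$ has the same conditional law as $(X_1,Y_1)$ given $X_{j-1}$, and that $F$ is a group) would give $Y_n+\beta(X_n)-\beta(X_0)\in na+F$ $\P_k$-a.s. for every $n\in\N$. Since $F$ has zero Lebesgue measure, the conditional distribution of $Y_n$ given $X_0=k, X_n=\ell$ would live on a translate of~$F$, so $\cG_{k,\ell,n}=0$ and $\P_k\{X_n=\ell\}=\mu_{k,\ell,n}(1_{\R^d})\le c\rho^n$ by~\textbf{(AC\ref{AS2})}; summing over $\ell$ yields $1\le Nc\rho^n$, which is absurd for large~$n$.

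Next, I would show that $\rho(\widehat\cY_1(\zeta))<1$ for every $\zeta\ne 0$ via a Wielandt-type argument. Since $|\widehat\cY_1(\zeta)_{k,\ell}|\le P_1(k,\ell)$ and $P_1$ is irreducible stochastic with Perron eigenvalue~$1$, any eigenvector $v$ of $\widehat\cY_1(\zeta)$ attached to a modulus-one eigenvalue $e^{i\theta}$ would satisfy $|v|=P_1|v|$ (hence $|v_k|$ is a nonzero constant by irreducibility) and saturate the triangle inequality in $e^{i\theta}v_k=\sum_\ell\widehat\cY_1(\zeta)_{k,\ell}v_\ell$. Writing $v_k=e^{i\tilde\beta(k)}$ and using equality in $|\E[\cdot]|\le\E|\cdot|$ forces $\langle\zeta,Y_1\rangle+\tilde\beta(X_1)-\tilde\beta(X_0)\in\theta+2\pi\Z$ $\P_k$-a.s. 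Choosing $\beta:\X\to\R^d$ with $\langle\zeta,\beta(k)\rangle=\tilde\beta(k)$ and $F:=\{y:\langle\zeta,y\rangle\in 2\pi\Z\}\ne\R^d$ then supplies a triple contradicting~\NL.

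Finally, continuity of $\widehat\cY_1(\cdot)$, upper semi-continuity of the spectral radius, and compactness of the annulus yield $\sigma:=\sup_{\delta\le\|\zeta\|\le A}\rho(\widehat\cY_1(\zeta))<1$. Fixing $\tau\in(\sigma,1)$, a finite-cover argument combining Gelfand's formula on local neighborhoods with the crude bound $\|\widehat\cY_1(\zeta)\|_0\le 1$ produces a constant $C_0>0$ such that $\|\widehat\cY_1(\zeta)^n\|_0\le C_0\tau^n$ uniformly on the annulus for every $n\in\N$. For the continuous-time case, factor $\widehat\cY_t(\zeta)=\widehat\cY_1(\zeta)^{\lfloor t\rfloor}\widehat\cY_v(\zeta)$ via~(\ref{semi-group}) with $v=t-\lfloor t\rfloor\in[0,1)$, note that $\|\widehat\cY_v(\zeta)\|_0\le 1$ uniformly in $v$, and plug into $\phi_{k,t}(\zeta)=e_k\widehat\cY_1(\zeta)^{\lfloor t\rfloor}\widehat\cY_v(\zeta)\mathbf{1}^\top$ to conclude~(\ref{ineg-non-ari}). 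I expect the Wielandt step combined with the derivation of~\NL from~\textbf{(AC\ref{AS2})} to be the main obstacle, since one must carefully track the equality cases in both the triangle inequality and $|\E[\cdot]|\le\E|\cdot|$, and convert the resulting coboundary identity into a legitimate triple $(a,F,\beta)$ of the type forbidden by~\NL.
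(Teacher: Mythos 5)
Your proposal is correct, and its overall architecture matches the paper's: reduce the annulus estimate to the non-lattice condition \NL, and derive \NL{} from \IP{} and \textbf{(AC\ref{AS2})} by exploiting the fact that a proper closed subgroup of $\R^d$ is Lebesgue-null together with the exponential decay of the singular mass. Where you differ is in how each half is carried out. For \NL{} the paper fixes a single integer $q$ with $g_{k,q}\neq 0$ in $\L^1(\R^d)$ (such a $q$ exists because $\mu_{k,t}(1_{\R^d})\to0$) and shows the lattice relation forces $\int_B g_{k,q}\,dy=0$ for all $B$, a contradiction; you instead iterate the coboundary relation to all integer times, conclude $\cG_{k,\ell,n}=0$ for every $n$, and contradict $\|\cM_n(1_{\R^d})\|_0\le c\rho^n$ via $1\le Nc\rho^n$ --- logically equivalent, and arguably a cleaner packaging of the same mechanism. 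For the implication ``\NL{} $\Rightarrow$ (\ref{ineg-non-ari})'' the paper simply cites the spectral method (\cite[p.~412]{FerHerLed12} and Lemma~6.2 of [HL]), whereas you reconstruct it: the Wielandt-type equality-case analysis showing $\rho(\widehat\cY_1(\zeta))<1$ for $\zeta\neq0$ (any peripheral eigenvector yields a triple $(a,F,\beta)$ with $F=\{y:\langle\zeta,y\rangle\in2\pi\Z\}$ violating \NL), then continuity of $\widehat\cY_1(\cdot)$, compactness of the annulus, Gelfand's formula with a finite cover, and the factorization $\widehat\cY_t(\zeta)=\widehat\cY_1(\zeta)^{\lfloor t\rfloor}\widehat\cY_{t-\lfloor t\rfloor}(\zeta)$ for continuous time. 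This buys self-containedness and makes explicit that, with a finite state space, the annulus bound needs no moment hypothesis at all (continuity of $\widehat\cY_1$ is automatic by dominated convergence), while the paper's citation is shorter and is the version that generalizes to non-finite state spaces. Two small technical remarks: in the covering/Gelfand step you should work with the submultiplicative norm $\|\cdot\|_\infty$ (the crude bound $\|\widehat\cY_v(\zeta)\|_\infty\le1$, valid for all $v$, is what you want; $\|\cdot\|_0$ is not submultiplicative, though (\ref{equiv-norm}) repairs this), and in the Wielandt step the passage from $|v|\le P_1|v|$ to $|v|$ constant should be justified by applying the positive invariant measure $\pi$ and Perron--Frobenius, exactly as you indicate.
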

\begin{proof}{} Lemma~\ref{lem-non-ari} can be classically derived from the spectral method under the non-lattice condition \NL~of Remark~\ref{rque-NL} and moment condition  $\M{\alpha}$ for some $\alpha>0$  \cite[p.~412]{FerHerLed12} (see also the proof of Lem. 6.2 in [HL] for details). Therefore, it is enough to prove that $\{Y_t\}_{t\in\T}$ satisfies \NL~under Assumptions~{\IP}~and \textbf{(AC1)}. 

For $t$ large enough, the function $g_{k,t}$ is nonzero in the Lebesgue space $\L^1(\R^d)$ since the mass $\mu_{k,t}(1_{\R^d})$ goes to $0$ when $t\r+\infty$ from Assumption~\textbf{(AC1)}. Let us fix $k\in\X$ and some integer $q\in\N^*$ such that $g_{k,q}\neq 0$ in $\L^1(\R^d)$.  
Assume that $\{Y_t\}_{t\in\T}$ is lattice. Then there exist $a\in\R^d$, $\theta : \X\r\R^d$ and a closed subgroup $F$ in $\R^d$, $F\neq \R^d$, such that 
$$Y_{q} + \theta(X_{q}) - \theta(k) \ \in \   a+F \ \ \ \P_k-\text{a.s.}\footnote{The lattice condition is classically expressed under $\P_\pi$ with $q=1$ by using the spectral method. However this condition can be similarly expressed with $Y_q$ for any $q\in\N^*$. Moreover it can be stated under $\P_k$ for every $k\in\X$ since $\X$ is finite and $P$ is irreducible and aperiodic from \IP.} .$$
For any $\ell\in\X$, define the following subset of $\R^d$: $F_{k,\ell} := \theta(k) - \theta(\ell) + a+F$. Note that $\ell_d(F_{k,\ell})=0$ since $F$ is a proper closed subgroup of $\R^d$. Since $\mu_{k,\ell,q}$ and $\ell_d$ are singular, there is $E_{k,\ell}\in B(\R^d)$ such that $\ell_d(E_{k,\ell})=0$ and $\mu_{k,\ell,q}(B)=\mu_{k,\ell,q}(B\cap E_{k,\ell})$ for any $B\in B(\R^d)$. Set $E_k:=\cup_{j=1}^N E_{k,j}$ and ${E_k}^c:=\R^d\setminus E_k$. Observe that $\mu_{k,\ell,q}({E_k}^c)=0$. We obtain  
\begin{eqnarray*}
\forall B\in B(\R^d), \ \P_k\big\{Y_q\in B\cap E_k^c\big\} &=& \sum_{\ell=1}^N \P_k\big\{X_q=\ell,Y_q\in F_{k,\ell}\cap B\cap E_k^c\big\} \\
&=& \sum_{\ell=1}^N\int_{F_{k,\ell}\cap B\cap E_k^c}g_{k,\ell,q}(y)\, dy + \sum_{\ell=1}^N\mu_{k,\ell,q}(1_{F_{k,\ell}\cap B\cap E_k^c})  =0.
\end{eqnarray*}
On the other hand, we have using (\ref{loi-Yt}) 
$$\forall B\in B(\R^d), \quad \P_k\big\{Y_q\in B\cap E_k^c\big\} = \int_{B\cap E_k^c}g_{k,q}(y)\, dy + \mu_{k,q}(1_{B\cap E_k^c}) = \int_{B\cap E_k^c}g_{k,q}(y)\, dy.$$
From these equalities and $\ell_d(E_k)=0$, it follows that  
$\int_{B}g_{k,q}(y)\, dy = \int_{B\cap E_k^c}g_{k,q}(y)\, dy = 0$. 
But this is impossible since $ g_{k,q}\neq 0$ in $\L^1(\R^d)$.  
\end{proof}
\begin{lem} \label{lem-hat-g} 
Under Condition~\emph{\textbf{(AC\ref{AS2})}}, there exist positive constants $A$ and $C$ such that  
$$|\zeta|\geq A\ \Longrightarrow\ \forall (k,\ell)\in\X^2,\ \forall t\in [t_0,+\infty), 
\quad \big|\widehat g_{k,\ell,t}(\zeta)\big| \leq C\, \frac{t}{2^{t/t_0}}.$$
\end{lem}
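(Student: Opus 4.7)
The plan is to exploit the semigroup identity (\ref{semi-group}) to reduce the estimate to a control of the single matrix $\widehat\cY_{t_0}(\zeta)$, whose entries can be made arbitrarily small for $\|\zeta\|$ large by (\ref{fourier-id}). Since $\widehat\cG_t=\widehat\cY_t-\widehat\cM_t$ and the entry-wise bound $|(\widehat\cM_t(\zeta))_{k,\ell}|\le\cM_{k,\ell,t}(1_{\R^d})\le c\rho^t$ from (\ref{masse-id}) handles the singular contribution, it suffices to bound the entries of $\widehat\cY_t(\zeta)$ for $\|\zeta\|\ge A$.

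First, I would pick $A>0$ so large that $\Gamma_{t_0}(\zeta)\le c\rho^{t_0}$ whenever $\|\zeta\|\ge A$, which is possible by (\ref{fourier-id}). Writing $\widehat\cY_{t_0}=\widehat\cG_{t_0}+\widehat\cM_{t_0}$ then gives $|(\widehat\cY_{t_0}(\zeta))_{k,\ell}|\le\Gamma_{t_0}(\zeta)+c\rho^{t_0}\le 2c\rho^{t_0}$. For $t\ge t_0$, decompose $t=(n-1)t_0+w$ with $w\in[t_0,2t_0)$ and $n\ge 1$; the semigroup identity (\ref{semi-group}) then factorizes $\widehat\cY_t(\zeta)=\widehat\cY_w(\zeta)\,\widehat\cY_{t_0}(\zeta)^{n-1}$. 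Iterating the elementary bound $\|AB\|_0\le N\|A\|_0\|B\|_0$ gives $\|\widehat\cY_{t_0}(\zeta)^{n-1}\|_0\le (2Nc\rho^{t_0})^{n-1}/N$, and at this point the hypothesis $\rho^{t_0}\max(2,cN)\le 1/4$ enters decisively, forcing $2Nc\rho^{t_0}\le 1/2$.

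Combining with the trivial bound $\|\widehat\cY_w(\zeta)\|_0\le 1$ yields $\|\widehat\cY_t(\zeta)\|_0\le (1/2)^{n-1}\le 4\cdot 2^{-t/t_0}$, where the last step uses $n-1\ge t/t_0-2$. Since $\rho^{t_0}\le 1/8$, also $c\rho^t\le c\cdot 2^{-t/t_0}$, so finally $|\widehat g_{k,\ell,t}(\zeta)|\le(4+c)\cdot 2^{-t/t_0}$, in fact stronger than the claimed $C\,t/2^{t/t_0}$. I expect the main delicate point is not any isolated estimate but the careful accounting of factors of $N$ accumulated through matrix multiplication; the factor $\max(2,cN)$ in \textbf{(AC\ref{AS2})} has been tuned precisely to absorb them.
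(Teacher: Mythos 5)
Your proposal is correct, and it takes a genuinely different --- in fact shorter --- route than the paper. The paper never bounds $\widehat\cY_t(\zeta)$ itself: it derives the approximate submultiplicativity (\ref{hat-G-presque-semi}) for $\|\widehat \cG_{t+s}(\zeta)\|_\infty$, in which the cross terms involving $\widehat\cM$ enter as additive errors, and then runs the induction (\ref{ineg-mat-hat-g}); those additive errors are what produce the linear-in-$n$ prefactor there, hence the factor $t$ in the statement of the lemma. You instead exploit the exact semigroup identity (\ref{semi-group}) for $\widehat\cY$: combining (\ref{fourier-id}) (used only at $w=t_0$) with the mass bound (\ref{masse-id}) on the singular part gives $N\|\widehat\cY_{t_0}(\zeta)\|_0\le 2Nc\rho^{t_0}\le 1/2$ for $\|\zeta\|\ge A$, you raise this to the power $n-1$ in the decomposition $t=(n-1)t_0+w$, $w\in[t_0,2t_0)$, bound the leftover factor trivially by $\|\widehat\cY_w(\zeta)\|_0\le 1$, and only at the end subtract $\widehat\cM_t$, whose mass is at most $c\rho^t\le c\,2^{-t/t_0}$. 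This yields the purely geometric bound $(4+c)\,2^{-t/t_0}$, which is stronger than the stated $C\,t\,2^{-t/t_0}$ and of course suffices for its use in Lemma~\ref{K2-pt}; both arguments consume the hypothesis $\rho^{t_0}\max(2,cN)\le 1/4$ in the same way, namely to absorb the factor $N$ generated by matrix products, exactly as you guessed. Two minor points of bookkeeping: your intermediate claim $\|\widehat\cY_{t_0}(\zeta)^{n-1}\|_0\le (2Nc\rho^{t_0})^{n-1}/N$ fails for $n=1$ (the left-hand side is then $\|I\|_0=1$), but that case, i.e.\ $t\in[t_0,2t_0)$, is covered directly by $\|\widehat\cY_t(\zeta)\|_0\le 1\le 4\cdot 2^{-t/t_0}$, so the conclusion is unaffected; and the particular choice $\Gamma_{t_0}(\zeta)\le c\rho^{t_0}$ is just one admissible way to fix $A$ --- any $A$ ensuring $N\big(\Gamma_{t_0}(\zeta)+c\rho^{t_0}\big)\le 1/2$ for $\|\zeta\|\ge A$ works.
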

\begin{proof}{ of Lemma~\emph{\ref{lem-hat-g}}}  Let us introduce the following matrix norm: $\forall A\in\cM_N(\C),\quad \|A\|_\infty := \sup_{\|v\|_0=1}\|A v^{\top}\|_0$. 
It is easily seen that 
\begin{equation} \label{equiv-norm}
\forall A\in\cM_N(\C),\quad \|A\|_0\leq \|A\|_\infty \leq N \|A\|_0.
\end{equation}
Let $(t,\zeta)\in\T\times\R^d$. From (\ref{dec-leb-mat}) we have $\widehat\cY_t(\zeta) = \widehat \cG_t(\zeta) + \widehat \cM_t(\zeta)$ and the semi-group property (\ref{semi-group}) 
gives for any $(s,t)\in\T^2$ and $\zeta\in\R^d$:  
\begin{eqnarray*}
\widehat\cY_{t+s}(\zeta) := \widehat \cG_{t+s}(\zeta) +  \widehat \cM_{t+s}(\zeta)  
&=& \widehat\cY_t(\zeta)\, \widehat\cY_s(\zeta) \label{car-int}  \\
&=& \big(\widehat \cG_t(\zeta) + \widehat \cM_t(\zeta)\big)\big(\widehat \cG_s(\zeta) +  \widehat \cM_s(\zeta)\big)  \\[0.12cm]
&=& \widehat \cG_t(\zeta)\widehat \cG_s(\zeta) + \widehat \cG_t(\zeta) \widehat \cM_s(\zeta) +  \widehat \cM_t(\zeta) \widehat  \cG_s(\zeta) + \widehat \cM_t(\zeta) \widehat \cM_s(\zeta).
\end{eqnarray*}
Thus
$$\widehat \cG_{t+s}(\zeta) = \widehat \cG_t(\zeta)\widehat \cG_s(\zeta) + \widehat \cG_t(\zeta) \widehat \cM_s(\zeta) +  \widehat \cM_t(\zeta) \widehat  \cG_s(\zeta) + \widehat \cM_t(\zeta) \widehat \cM_s(\zeta) - \widehat \cM_{t+s}(\zeta).$$
Then, we obtain using the matrix norm $\|\cdot\|_\infty$:  
\begin{eqnarray}
\|\widehat \cG_{t+s}(\zeta)\|_\infty &\leq& \|\widehat \cG_t(\zeta)\|_\infty \|\widehat \cG_s(\zeta)\|_\infty + \|\widehat \cG_t(\zeta)\|_\infty \|\widehat \cM_s(\zeta)\|_\infty + \|\widehat \cM_t(\zeta)\|_\infty \|\widehat \cG_s(\zeta)\|_\infty \nonumber \\
&\ & \qquad  \qquad  \qquad  \qquad + \qquad \|\widehat \cM_t(\zeta)\|_\infty \| \widehat \cM_s(\zeta)\|_\infty +  \|\widehat \cM_{t+s}(\zeta)\|_\infty. \label{hat-G-presque-semi}
\end{eqnarray}
Moreover we deduce from (\ref{masse-id}) and (\ref{equiv-norm}) with $\rho$ and $c$ given in Condition~\textbf{(AC\ref{AS2})}, that: 
\begin{equation} \label{masse-id-bis}
\forall u>0,\ \forall \zeta\in\R^d, \quad \|\widehat \cM_u(\zeta)\|_\infty \leq Nc\, \rho^u.
\end{equation}
We set, with $t_0$ given in \textbf{(AC\ref{AS2})},
$$K :=\max(2,Nc)
\quad \text{and} \quad \Gamma(\zeta) = \max\big(2\rho^{t_0},\sup_{w\in[t_0,2t_0)}\|\widehat \cG_w(\zeta)\|_{\infty}\big).$$ 
Let us prove by induction that the following inequality holds for any $n\in\N^*$ and $v\in[0,t_0)$: 
\begin{equation} \label{ineg-mat-hat-g}
\forall \zeta\in\R^d,\quad \| \widehat \cG_{nt_0+v}(\zeta) \|_{\infty} \leq \big[1+(1+K)(n-1)\big]\, \Gamma(\zeta)\, \big( \Gamma(\zeta) + K \rho^{t_0}\big)^{n-1}. 
\end{equation}
First, (\ref{ineg-mat-hat-g}) is obvious for $n:=1$. Second assume that (\ref{ineg-mat-hat-g}) holds for some $n\in\N^*$. Then, using (\ref{hat-G-presque-semi}) with $t:=t_0+v$ and $s:=nt_0$, it  follows from (\ref{masse-id-bis}) and the definition of $\Gamma(\cdot)$ that  
\begin{eqnarray*}
\| \widehat \cG_{(n+1)t_0+v}(\zeta) \|_{\infty} &=& \| \widehat \cG_{t_0+v+nt_0}(\zeta) \|_{\infty} \\
&\leq& \Gamma(\zeta)\, \|\widehat \cG_{nt_0}(\zeta)\|_{\infty}  +  \Gamma(\zeta)\, K\, \rho^{nt_0} +  K\, \rho^{t_0} \|\widehat \cG_{nt_0}(\zeta)\|_{\infty} + 2\, K^2\rho^{t_0+nt_0} \\
&\leq&  \big(\Gamma(\zeta) + K\rho^{t_0}\big)\|\widehat \cG_{nt_0}(\zeta)\|_{\infty} +  \Gamma(\zeta)\, K \rho^{nt_0} + 2\, K^2\rho^{t_0+nt_0} \\
\text{(by induction)} &\leq& \big[1+(1+K)(n-1)\big]\, \Gamma(\zeta)\, \big(\Gamma(\zeta) + K \rho^{t_0}\big)^n  + \Gamma(\zeta)\, K \rho^{nt_0} + 2\, K^2\rho^{t_0+nt_0}.
\end{eqnarray*}
Next, using $K\rho^{nt_0} \leq (K\rho^{t_0})^n \leq (\Gamma(\zeta) + K\rho^{t_0})^n$ and $2\rho^{t_0} \leq \Gamma(\zeta)$, we obtain 
\begin{eqnarray*}
\Gamma(\zeta)\, K \rho^{nt_0} + 2\, K^2\rho^{t_0+nt_0} &\leq& \Gamma(\zeta)\, (\Gamma(\zeta) + K\rho^{t_0})^n + 2K\rho^{t_0}\, (\Gamma(\zeta) + K\rho^{t_0})^n \\
&\leq& (1+K)\, \Gamma(\zeta)\, (\Gamma(\zeta) + K\rho^{t_0})^n. 
\end{eqnarray*}
Hence 
$$\| \widehat \cG_{(n+1)t_0+v}(\zeta) \|_{\infty} \leq \big[1+(1+K)n\big]\, \Gamma(\zeta)\, \big(\Gamma(\zeta) + K \rho^{t_0}\big)^n.$$
The induction is complete and proves (\ref{ineg-mat-hat-g}). 

Now let $t\in[t_0,+\infty)$. By writing $t=nt_0+v$ with $n\in\N^*$ and $v\in[0,t_0)$, Lemma~\ref{lem-hat-g} follows from (\ref{ineg-mat-hat-g}) and Condition~\textbf{(AC\ref{AS2})}. Indeed (\ref{equiv-norm}) gives  
$\Gamma(\zeta)\leq \max\big(2\rho^{t_0},N\Gamma_{t_0}(\zeta)\big)$ 
with  $\Gamma_{t_0}(\cdot)$ defined in (\ref{fourier-id}). From \textbf{(AC\ref{AS2})} we have $K\rho^{t_0}\leq 1/4$, and from (\ref{fourier-id}) and $K\ge 2$ we deduce that  there exists $A>0$ such that $\sup_{\|\zeta\|\geq A} \Gamma(\zeta) \leq 1/4$. Using $n=(t-v)/t_0$ with $v\in[0,t_0)$ and (\ref{equiv-norm}), we derive the expected inequality in Lemma~\ref{lem-hat-g}. 
\end{proof}
%
%============================
%============================
 \subsection{Proof of Theorem~\ref{main-id}} \label{Main}
%============================
%============================

 The density $f_{k,t}$ of the a.c.~part of the probability distribution of $t^{-1/2}Y_t$ under $\P_k$ is given by  (see (\ref{loi-Yt}))
$$\forall y \in \cE_t := t^{-1/2} \cD_t, \qquad f_{k,t}(y) = t^{d/2}\, g_{k,t}(t^{1/2} y ).$$
Denote the closure and the frontier of $\cE_t$ by $\overline{\cE}_t$ and $\partial\cE_t$ respectively. 
The following properties of $f_{k,t}$ are easily deduced from \textbf{(AC\ref{AS0})}. 
\begin{lem} \label{Bornes_F} 
Condition~\emph{\textbf{(AC\ref{AS0})}} is assumed to hold. Then, for each $k=1,\ldots,N$ and for all $t>0$, the function $f_{k,t}$ is continuous on  $\overline{\cE}_t$ and differentiable on $\cE_t$. In addition, we have   
\begin{gather*}
\sup_{y\in\overline{\cal E}_t} |f_{k,t}(y)| = O(t^{d/2})  \qquad 
\sup_{y\in \partial {\cal E}_t} |f_{k,t}(y)| = O(t^{-1/2}) \qquad
\max_{1\leq j \leq d}\, 
\sup_{y\in{\cal E}_t}\big|\frac{\partial f_{k,t}}{\partial y_j}(y)\big| = O(t^{(d+1)/2}). 
\end{gather*}
\end{lem}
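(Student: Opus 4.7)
The plan is a direct change-of-variables computation, pushing the bounds from Condition \textbf{(AC\ref{AS0})} through the scaling $y \mapsto t^{1/2}y$ that links $g_{k,t}$ to $f_{k,t}$.

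First I would note that, by definition, $f_{k,t}(y) = t^{d/2} g_{k,t}(t^{1/2}y)$ where $g_{k,t} = \sum_{\ell=1}^N g_{k,\ell,t}$ is the sum of the entries in the $k$-th row of the matrix $G_t$. The affine bijection $y \mapsto t^{1/2}y$ sends $\cE_t$ onto $\cD_t$ and $\partial \cE_t$ onto $\partial \cD_t$, so the support and smoothness properties of $f_{k,t}$ on $\cE_t$ follow immediately from those of $G_t$ on $\cD_t$ given by \textbf{(AC\ref{AS0})}: $f_{k,t}$ vanishes outside $\overline{\cE}_t$, is continuous on $\overline{\cE}_t$, and differentiable on $\cE_t$.

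For the sup bounds, the estimate $|g_{k,t}(z)| \le N \|G_t(z)\|_0$ (since each row has $N$ entries) combined with $\sup_{z\in\overline{\cD}_t}\|G_t(z)\|_0 = O(1)$ gives
\[
\sup_{y\in\overline{\cE}_t}|f_{k,t}(y)| \le t^{d/2}\cdot N\cdot \sup_{z\in\overline{\cD}_t}\|G_t(z)\|_0 = O(t^{d/2}),
\]
and the same step using the boundary estimate $\sup_{z\in\partial \cD_t}\|G_t(z)\|_0 = O(t^{-(d+1)/2})$ yields $\sup_{y\in\partial\cE_t}|f_{k,t}(y)| = O(t^{d/2})\cdot O(t^{-(d+1)/2}) = O(t^{-1/2})$.

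For the derivative bound, the chain rule gives $\partial f_{k,t}/\partial y_j(y) = t^{(d+1)/2}\, \partial g_{k,t}/\partial y_j(t^{1/2}y)$, and a final application of $|\partial g_{k,t}/\partial y_j| \le N\|\partial G_t/\partial y_j\|_0 = O(1)$ from \textbf{(AC\ref{AS0})} yields the claim. There is no real obstacle here; the only thing to be careful about is correctly tracking the factor $t^{1/2}$ produced by the chain rule (which is what turns the $O(t^{d/2})$ prefactor into $O(t^{(d+1)/2})$) and recognizing that each estimate on the matrix $G_t$ transfers to a scalar estimate on $g_{k,t}$ up to a harmless factor of $N$.
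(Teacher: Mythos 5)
Your proof is correct and follows the same route the paper has in mind: the paper omits the argument as ``easily deduced from \textbf{(AC\ref{AS0})}'', and your change of variables $y\mapsto t^{1/2}y$ with the bound $|g_{k,t}|\le N\|G_t\|_0$ and the chain-rule factor $t^{1/2}$ is exactly that deduction, with the exponents tracked correctly.
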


For any $t>0$, set $d(y,\partial\cE_t) := \inf\{d(y,z),\, z\in\partial\cE_t\}$ and  define 
\begin{equation} \label{def-D't}
\cE'_t := \big\{y\in\cE_t,\ d(y,\partial\cE_t)> t^{-(d/2+1)}\big\},
\end{equation}
 Theorem~\ref{main-id} follows from the two following propositions. 
\begin{pro} \label{pro-second-id} Under Condition~\emph{\textbf{(AC\ref{AS0})}}, there is a positive constant $C$ such that 
\begin{eqnarray*}
\forall t\in\T,\ \forall y\in \R^d\setminus{\cal E}'_t, \quad \big|f_{k,t}(y) - \eta_{I}(y)\big| \leq C\big( t^{-1/2} + \sup_{y\notin {\cal E}_t} \eta_{I}(y) \big).
\end{eqnarray*}
\end{pro}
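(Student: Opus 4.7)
The plan is to partition $\R^d \setminus \cE'_t$ into three zones and handle each using a different conclusion of Lemma~\ref{Bornes_F}. Observe that $\R^d \setminus \cE'_t = (\R^d \setminus \overline{\cE}_t) \cup \partial \cE_t \cup (\cE_t \setminus \cE'_t)$.

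\textbf{Zone A: $y \notin \overline{\cE}_t$.} Since $G_t$ vanishes outside $\overline{\cD}_t$ by \textbf{(AC\ref{AS0})}, the density $f_{k,t}(y) = t^{d/2} g_{k,t}(t^{1/2}y)$ vanishes outside $\overline{\cE}_t$. Hence $|f_{k,t}(y) - \eta_I(y)| = \eta_I(y) \leq \sup_{z \notin \cE_t} \eta_I(z)$, directly contributing the second term of the bound.

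\textbf{Zone B: $y \in \partial \cE_t$.} Since $\cE_t$ is open, $\partial \cE_t \cap \cE_t = \emptyset$, so $\eta_I(y) \leq \sup_{z \notin \cE_t} \eta_I(z)$. Combined with the boundary estimate $|f_{k,t}(y)| = O(t^{-1/2})$ from Lemma~\ref{Bornes_F}, the triangle inequality gives the desired bound.

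\textbf{Zone C: $y \in \cE_t \setminus \cE'_t$.} Here $d(y, \partial \cE_t) \leq t^{-(d/2+1)}$, so pick $z \in \partial \cE_t$ with $\|y - z\| \leq t^{-(d/2+1)}$. Since $\cE_t$ is open and convex and $y \in \cE_t$, the half-open segment $[y,z)$ lies in $\cE_t$ (where $f_{k,t}$ is differentiable) and $z$ lies in $\overline{\cE}_t$ (where $f_{k,t}$ is continuous). The mean value theorem together with the gradient bound $\max_j\sup_{\cE_t}|\partial_j f_{k,t}| = O(t^{(d+1)/2})$ yields
$$|f_{k,t}(y) - f_{k,t}(z)| \leq \sqrt{d}\, \|y-z\| \max_j\sup_{\cE_t}|\partial_j f_{k,t}| = O\bigl(t^{-(d/2+1)} \cdot t^{(d+1)/2}\bigr) = O(t^{-1/2}).$$
Combined with $|f_{k,t}(z)| = O(t^{-1/2})$ this gives $|f_{k,t}(y)| = O(t^{-1/2})$. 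For the Gaussian side, $\eta_I$ is globally Lipschitz on $\R^d$, so $|\eta_I(y) - \eta_I(z)| = O(t^{-(d/2+1)}) = O(t^{-1/2})$; since $z \notin \cE_t$, we get $\eta_I(y) \leq \sup_{z \notin \cE_t} \eta_I(z) + O(t^{-1/2})$. Triangle inequality again closes the case.

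The only delicate point is Zone C, where one must verify that the exponents in the definition $\cE'_t := \{y \in \cE_t : d(y,\partial\cE_t) > t^{-(d/2+1)}\}$ are calibrated so that the Lipschitz extension from $\partial\cE_t$ inward costs exactly $t^{-1/2}$: the product $t^{-(d/2+1)} \cdot t^{(d+1)/2} = t^{-1/2}$ is what makes the inner shell the right thickness to match the rate of the theorem. Everything else is bookkeeping with Lemma~\ref{Bornes_F}.
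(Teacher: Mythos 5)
Your proof is correct and follows essentially the same route as the paper: the same three-zone split of $\R^d\setminus\cE'_t$ into $\R^d\setminus\overline{\cE}_t$, $\partial\cE_t$ and $\cE_t\setminus\cE'_t$, with the last zone handled by the mean-value/Taylor inequality along a segment to a nearby boundary point using the bounds of Lemma~\ref{Bornes_F}, exactly as in the paper. Your remark on the calibration $t^{-(d/2+1)}\cdot t^{(d+1)/2}=t^{-1/2}$ and the convexity argument for $[y,z)\subset\cE_t$ are just slightly more explicit versions of what the paper leaves implicit.
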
 
\begin{proof}{}
First let $y\in\cE_t\setminus{\cal E}'_t$. By definition of $\cE'(t)$, there exists some $y_t\in\partial\cE_t$ such that $\|y-y_t\| \leq 2\, t^{-(d/2+1)}$. From Taylor's inequality and Lemma~\ref{Bornes_F} it follows that 
\begin{eqnarray*}
\big|f_{k,t}(y) - \eta_{I}(y)\big|	 & \le & \big|f_{k,t}(y) - f_{k,t}(y_t)\big| + 
	\big|f_{k,t}(y_t) - \eta_{I}(y_t)\big| + \big|\eta_{I}(y_t) - \eta_{I}(y) \big|  \\
	&\leq& 2\,  t^{-(d/2+1)} O(t^{(d+1)/2})  + O(t^{-1/2}) + \sup\{\eta_{I}(x): x\in\partial{\cal E}_t\} +  2\, t^{-(d/2+1)} \\
	&\leq& O(t^{-1/2}) + \sup\{\eta_{I}(x): x\in\partial{\cal E}_t\}. 
\end{eqnarray*}
\vspace*{-5mm}

\noindent Second, let $y\in\overline{\cE}_t\setminus{\cal E}_t = \partial{\cal E}_t$. Then $|f_{k,t}(y) - \eta_{I}(y)| \leq O(t^{-1/2}) + \sup\{\eta_{I}(x): x\in\partial{\cal E}_t\}$  
from Lemma~\ref{Bornes_F}. Third, let $y\in\R^d\setminus \overline{\cE}_t$. Then $|f_{k,t}(y) - \eta_{I}(y)|=|\eta_{I}(y)| \leq \sup\{\eta_{I}(x): x\notin {\cal E}_t\}$. The proof of  Proposition~\ref{pro-second-id} is complete. 
\end{proof}
\begin{pro} \label{pro-main-id}
Assume that Conditions~\emph{\IP}, $\M{3}$, 
and \emph{\textbf{(AC\ref{AS2})-(AC\ref{AS0})}} hold true. Then there exists a positive constant $C$ such that
$$\forall t\in\T,\ \forall y\in {\cal E}'_t,\quad \big|f_{k,t}(y) - \eta_{I}(y)\big| \leq C \, t^{-1/2}.
\vspace*{-1mm}
$$
\end{pro}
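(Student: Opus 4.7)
Since the paper has already reduced to the case $\Sigma = I$, the task is to show $|f_{k,t}(y) - \eta_I(y)| \leq C t^{-1/2}$ for $y \in \cE'_t$. I would approach this via Fourier inversion. The scaling $f_{k,t}(y) = t^{d/2} g_{k,t}(t^{1/2} y)$ gives $\widehat f_{k,t}(\zeta) = \widehat g_{k,t}(t^{-1/2}\zeta)$, while $\phi_{k,t} = \widehat g_{k,t} + \widehat \mu_{k,t}$ combined with \textbf{(AC\ref{AS2})} yields $|\widehat g_{k,t}(\zeta) - \phi_{k,t}(\zeta)| \leq Nc\rho^{t}$ uniformly in $\zeta$. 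So, at the cost of an exponentially small error from the singular part, it suffices to compare the inverse Fourier transforms of $\phi_{k,t}(t^{-1/2}\cdot)$ and of $e^{-\|\cdot\|^2/2}$.

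Because $\widehat f_{k,t}$ is bounded but not a priori integrable at infinity, I would smooth both $f_{k,t}$ and $\eta_I$ by convolution with a smooth compactly-supported kernel $K_\epsilon$ at scale $\epsilon := t^{-(d/2+1)}$. The scale matches the one defining $\cE'_t$: for $y \in \cE'_t$ the translated support $y + \mathrm{supp}(K_\epsilon)$ stays inside $\cE_t$, and Lemma~\ref{Bornes_F} then yields
\[
|f_{k,t}^\epsilon(y) - f_{k,t}(y)| \leq \epsilon \, \|\nabla f_{k,t}\|_\infty = O\!\bigl(\epsilon \, t^{(d+1)/2}\bigr) = O(t^{-1/2}),
\]
and likewise $|\eta_I^\epsilon(y) - \eta_I(y)| = O(\epsilon) = O(t^{-1/2})$. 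It therefore suffices to bound the smoothed difference $f_{k,t}^\epsilon(y) - \eta_I^\epsilon(y)$.

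For the smoothed difference, Fourier inversion is absolutely convergent and gives
\[
f_{k,t}^\epsilon(y) - \eta_I^\epsilon(y) = \frac{1}{(2\pi)^d}\int_{\R^d} e^{-i\langle\zeta,y\rangle}\bigl[\widehat g_{k,t}(t^{-1/2}\zeta) - e^{-\|\zeta\|^2/2}\bigr]\widehat K_\epsilon(\zeta)\, d\zeta,
\]
which I would split into three regions, each controlled by one of the lemmas of Section~4.1.
\emph{(i)} $\|\zeta\| < \delta\, t^{1/2}$: substitute the spectral expansion of Lemma~\ref{lem-dec-vp}; the estimates (\ref{exp-val-pert0-bis})--(\ref{exp-val-pert2}) bound the integrand by $C t^{-1/2}(1+\|\zeta\|^3) e^{-\|\zeta\|^2/8} + C r^{\lfloor t\rfloor}$, which integrates to $O(t^{-1/2})$.
\emph{(ii)} $\delta\, t^{1/2} \leq \|\zeta\| \leq A\, t^{1/2}$: Lemma~\ref{lem-non-ari} gives $|\phi_{k,t}(t^{-1/2}\zeta)| \leq D \tau^{\lfloor t\rfloor}$, and $e^{-\|\zeta\|^2/2} \leq e^{-\delta^2 t/2}$; the annulus has volume $O(t^{d/2})$, so the contribution is exponentially small.
\emph{(iii)} $\|\zeta\| \geq A\, t^{1/2}$: Lemma~\ref{lem-hat-g} gives $|\widehat g_{k,t}(t^{-1/2}\zeta)| \leq C t/2^{t/t_0}$ uniformly, and the rapid decay of $\widehat K_\epsilon$ (faster than any polynomial in $\epsilon\|\zeta\|$, since $K$ is smooth) supplies the integrability required to handle the unbounded range, yielding a bound of order $(\mathrm{poly}\, t)\cdot 2^{-t/t_0}$.

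The main technical obstacle is the simultaneous calibration of the scale $\epsilon$: it must be small enough for the smoothing error $\epsilon\|\nabla f_{k,t}\|_\infty$ to be $O(t^{-1/2})$, small enough that $\mathrm{supp}(K_\epsilon(y-\cdot))$ remains inside the smooth region $\cE_t$ for every $y \in \cE'_t$, and yet not so small that $\widehat K_\epsilon$ would damp the main contribution coming from region (i). The choice $\epsilon = t^{-(d/2+1)}$ meets all three requirements at once, which is exactly why the boundary layer of thickness $t^{-(d/2+1)}$ appears in the definition~(\ref{def-D't}) of $\cE'_t$.
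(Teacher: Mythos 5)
Your argument is correct and is essentially the paper's own proof: smooth $f_{k,t}$ and $\eta_{I}$, control the smoothing error through Lemma~\ref{Bornes_F} and the $t^{-(d/2+1)}$ margin built into the definition of $\cE'_t$, then bound the smoothed difference by Fourier inversion split over $\|\zeta\|\le\delta\sqrt t$, $\delta\sqrt t\le\|\zeta\|\le A\sqrt t$ and $\|\zeta\|\ge A\sqrt t$, handled respectively by Lemmas~\ref{lem-dec-vp}, \ref{lem-non-ari} and \ref{lem-hat-g} (with the singular part removed at exponentially small cost via (\ref{masse-id})). The only difference is cosmetic: you mollify with a compactly supported kernel at scale $t^{-(d/2+1)}$, whereas the paper convolves with the Gaussian $\eta_{p_t,I}$, $p_t=\lfloor t^{d+3/2}\rfloor+1$, and disposes of the Gaussian mass outside that margin by a tail estimate.
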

Note that the inequality above is valid for $k\in\X$ and $t\in[0,t_0]$ where $t_0$ is given in \textbf{(AC\ref{AS2})} since $\eta_{I}$ is bounded on $\R^d$ and  $f_{k,t}$ is uniformly bounded on $\R^d$ with respect to $k$ and $t\in[0,t_0]$  from Lemma~\ref{Bornes_F}. 

\noindent\begin{proof}{}
For any $p\in\N^*$, let $\eta_{p,I}$ be the function defined by $\forall y\in\R^d,\ \eta_{p,I}(y) := p^d\, \eta_{I}(py)$. The usual convolution product on $\R^d$ is denoted by the symbol $\star$. Write for all $p\in\N^*$: 
\begin{eqnarray}
\lefteqn{\big|f_{k,t}(y) - \eta_{I}(y)\big|} \nonumber\\
&  \leq & \big|f_{k,t}(y) - (\eta_{p,I} \star f_{k,t})(y)\big| + \big|(\eta_{p,I} \star f_{k,t})(y) - (\eta_{p,I} \star \eta_{I})(y)\big| + \big|(\eta_{p,I} \star \eta_{I})(y) - \eta_{I}(y)\big| \nonumber \\
& & := K_{1,p}(y) + K_{2,p}(y) + K_{3,p}(y). \label{recap-K}
\end{eqnarray}
The conclusion of Proposition~\ref{pro-main-id} follows from the two next lemmas. 
\end{proof}

For $t>t_0$ we define the following positive integer $p_t := \lfloor t^{d+3/2}\rfloor + 1$.
\begin{lem} \label{lem-K1-K2}
There exists $D>0$ such that  
$$\forall t\in [t_0,+\infty),\ \forall p\geq p_t,\ \forall y\in {\cal E}'_t,\quad K_{1,p}(y) + K_{3,p}(y) \leq D \, t^{-1/2}.$$
\end{lem}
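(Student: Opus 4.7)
The plan is to estimate $K_{3,p}(y)$ and $K_{1,p}(y)$ separately by direct manipulation of the scaled convolution $\eta_{p,I}$, whose mass is concentrated in a ball of radius $O(1/p)$ around the origin. The key geometric observation is that, by the very definition (\ref{def-D't}), for any $y\in\cE'_t$ and any $z$ with $\|z\|\leq\delta_t := \tfrac12\, t^{-(d/2+1)}$, the point $y-z$ lies in $\cE_t$ and the whole segment from $y-z$ to $y$ stays in $\cE_t$; this is precisely where Taylor's inequality combined with the gradient bound of Lemma~\ref{Bornes_F} applies.

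First I treat $K_{3,p}(y)$. The density $\eta_I$ is globally Lipschitz on $\R^d$ with some constant $L$ depending only on $d$. Writing $K_{3,p}(y) = \bigl|\int[\eta_I(y-z)-\eta_I(y)]\eta_{p,I}(z)\,dz\bigr|$ and using the change of variable $z = w/p$ yields
\begin{equation*}
K_{3,p}(y) \;\leq\; L\int_{\R^d}\|z\|\,\eta_{p,I}(z)\,dz \;=\; \frac{L}{p}\int_{\R^d}\|w\|\,\eta_I(w)\,dw \;=\; O(1/p),
\end{equation*}
and since $p\geq p_t\geq t^{d+3/2}$ this gives $K_{3,p}(y)=O(t^{-(d+3/2)})=O(t^{-1/2})$.

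Next I handle $K_{1,p}(y)$ by splitting $K_{1,p}(y)\leq I_{\mathrm{near}}+I_{\mathrm{far}}$ according to $\|z\|\leq\delta_t$ versus $\|z\|>\delta_t$. In $I_{\mathrm{near}}$, the geometric observation above together with Taylor's inequality and the bound $\max_j\sup_{\cE_t}|\partial f_{k,t}/\partial y_j| = O(t^{(d+1)/2})$ from Lemma~\ref{Bornes_F} give $|f_{k,t}(y)-f_{k,t}(y-z)| \leq O(t^{(d+1)/2})\,\|z\|$; integrating against $\eta_{p,I}$ as above yields $I_{\mathrm{near}} = O(t^{(d+1)/2}/p) = O(t^{-1})$. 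For $I_{\mathrm{far}}$, I use the uniform bound $\sup_{\R^d}|f_{k,t}|=O(t^{d/2})$ from Lemma~\ref{Bornes_F} (together with the fact that $f_{k,t}$ vanishes outside $\overline{\cE}_t$) and the standard Gaussian tail: if $W\sim\cN(0,I)$, then $\int_{\|z\|>\delta_t}\eta_{p,I}(z)\,dz = \P(\|W\|>p\delta_t)$, and since $p\delta_t\geq \tfrac12\, t^{(d+1)/2}$ this probability decays faster than any polynomial in $t$, whence $I_{\mathrm{far}} = o(t^{-1/2})$. Summing, $K_{1,p}(y)+K_{3,p}(y) = O(t^{-1/2})$ uniformly in $p\geq p_t$ and $y\in\cE'_t$.

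The main obstacle is purely one of book-keeping: three scales must balance simultaneously. The cutoff $\delta_t$ has to be smaller than the distance $d(y,\partial\cE_t)$ (at least $t^{-(d/2+1)}$ for $y\in\cE'_t$); the mollifier parameter $p$ has to be large enough that the Gaussian mass of $\eta_{p,I}$ beyond that cutoff is negligible (i.e., $p\delta_t\to\infty$ fast); and the product (gradient of $f_{k,t}$)$\,\times\,(1/p)$ must be at most $O(t^{-1/2})$. The exponents in the definition (\ref{def-D't}) of $\cE'_t$ and in $p_t := \lfloor t^{d+3/2}\rfloor + 1$ are precisely chosen so that these three constraints hold simultaneously, with slack to spare.
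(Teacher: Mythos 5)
Your proof is correct and follows essentially the same route as the paper: split the convolution against $\eta_{p,I}$ at the scale $t^{-(d/2+1)}$ dictated by the definition of $\cE'_t$, use Taylor's inequality with the gradient bound of Lemma~\ref{Bornes_F} on the near region, and use the sup bound $O(t^{d/2})$ together with the Gaussian tail made superpolynomially small by the choice $p\geq p_t$ on the far region, with $K_{3,p}$ handled via the smoothness of $\eta_{I}$. The minor variations (keeping the first moment of $\eta_{p,I}$ in the near part rather than bounding $\|u\|$ by the cutoff, and a single global Lipschitz estimate for $K_{3,p}$ instead of the same splitting) are cosmetic.
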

\begin{lem} \label{K2-pt}
There exists $E>0$ such that  
$$\forall t\in [t_0,+\infty),\ \forall y\in \R^d,\quad K_{2,p_t}(y) 
\leq E \, t^{-1/2}.$$
\end{lem}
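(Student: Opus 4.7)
{ of Lemma~\ref{K2-pt} (sketch of plan)}
My plan is to pass to the Fourier side. Since $\widehat{\eta_{p,I}}(\zeta) = e^{-\|\zeta\|^2/(2p^2)}$ and $\widehat{\eta_{I}}(\zeta) = e^{-\|\zeta\|^2/2}$ (recall we have reduced to $\Sigma = I$), Fourier inversion gives the pointwise bound
\begin{equation*}
K_{2,p}(y) \;\leq\; \frac{1}{(2\pi)^d}\int_{\R^d} e^{-\|\zeta\|^2/(2p^2)}\,\bigl|\widehat f_{k,t}(\zeta) - e^{-\|\zeta\|^2/2}\bigr|\, d\zeta,
\end{equation*}
uniformly in $y\in\R^d$. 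The change of variable $x = t^{1/2}y$ in the definition $f_{k,t}(y) = t^{d/2}g_{k,t}(t^{1/2}y)$ gives $\widehat f_{k,t}(\zeta) = \widehat g_{k,t}(t^{-1/2}\zeta)$, and from (\ref{loi-Yt}) combined with (\ref{masse-id}), $\widehat g_{k,t}(t^{-1/2}\zeta) = \phi_{k,t}(t^{-1/2}\zeta) - \widehat{\mu_{k,t}}(t^{-1/2}\zeta)$ with the singular term bounded by $Nc\rho^t$ uniformly in $\zeta$.

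I would then split the integral over $\R^d$ into three zones calibrated to the three lemmas of Subsection~4.1. Zone~1 is the ball $\|\zeta\|\leq \delta\sqrt{t}$ (with $\delta$ from Lemma~\ref{lem-dec-vp}). There the factor $e^{-\|\zeta\|^2/(2p^2)}$ is $\le 1$ and we can drop it; we insert the spectral expansion (\ref{exp-val-pert}), decompose
\begin{equation*}
\lambda(t^{-1/2}\zeta)^{\lfloor t\rfloor}L_{k,t}(t^{-1/2}\zeta) - e^{-\|\zeta\|^2/2} = \bigl[\lambda(t^{-1/2}\zeta)^{\lfloor t\rfloor} - e^{-\|\zeta\|^2/2}\bigr]L_{k,t}(t^{-1/2}\zeta) + e^{-\|\zeta\|^2/2}\bigl[L_{k,t}(t^{-1/2}\zeta)-1\bigr],
\end{equation*}
and use (\ref{exp-val-pert0-bis}) together with (\ref{exp-val-pert1}) to bound the integrand by $Ct^{-1/2}(1+\|\zeta\|^4)e^{-\|\zeta\|^2/8}$ up to the remainder $|R_{k,t}|\le Cr^{\lfloor t\rfloor}$ and the singular contribution $Nc\rho^t$. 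The Gaussian weight makes this zone integrable, contributing $O(t^{-1/2})$. Zone~2 is the annulus $\delta\sqrt{t}\le\|\zeta\|\le A\sqrt{t}$; Lemma~\ref{lem-non-ari} applied to $t^{-1/2}\zeta$ gives $|\phi_{k,t}(t^{-1/2}\zeta)|\le D\tau^{\lfloor t\rfloor}$, and the Gaussian comparison term is uniformly $O(e^{-\delta^2 t/2})$, so the contribution is $O(t^{d/2}(\tau^{\lfloor t\rfloor}+\rho^t+e^{-\delta^2t/2})) = O(t^{-1/2})$.

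Zone~3 is $\|\zeta\|\ge A\sqrt{t}$ (with $A$ from Lemma~\ref{lem-hat-g}). This is the place where the smoothing parameter $p=p_t$ matters: Lemma~\ref{lem-hat-g} gives $|\widehat f_{k,t}(\zeta)|\le \sum_{k,\ell}|\widehat g_{k,\ell,t}(t^{-1/2}\zeta)|\le CN^2 t/2^{t/t_0}$ (plus the negligible singular piece), and the Gaussian tail contributes an extra $\int_{\|\zeta\|\ge A\sqrt t}e^{-\|\zeta\|^2/2}\,d\zeta$ which is exponentially small. Using
\begin{equation*}
\int_{\R^d} e^{-\|\zeta\|^2/(2p_t^2)}\, d\zeta = (2\pi)^{d/2}p_t^d,
\end{equation*}
the overall bound for Zone~3 is $O(p_t^d \cdot t\cdot 2^{-t/t_0})$, which decays faster than any polynomial in $t$ even for $p_t$ polynomial of degree $d+3/2$, hence is $O(t^{-1/2})$.

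I expect the main technical point to be the Zone~1 analysis: making sure the factor $L_{k,t}(t^{-1/2}\zeta)$, which is only bounded by $1+C\delta$ on $\|\zeta\|\leq \delta\sqrt{t}$, does not destroy integrability when multiplied against $(1+\|\zeta\|^3)e^{-\|\zeta\|^2/8}$. This is resolved because $L_{k,t}(t^{-1/2}\zeta)-1 = O(\|\zeta\|/\sqrt{t})$ contributes an extra factor of $1/\sqrt t$, while keeping the bound uniform in $t\geq t_0$. The choice $p_t = \lfloor t^{d+3/2}\rfloor+1$ plays no role in Zones~1--2 but is exactly tailored so that Zone~3 remains negligible; since the bound is independent of $y$, this yields $K_{2,p_t}(y)\le E t^{-1/2}$ uniformly in $y\in\R^d$.
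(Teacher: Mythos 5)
Your plan is correct and follows essentially the same route as the paper's proof: the same Fourier-inversion bound with the smoothing factor $\widehat\eta_{I}(p^{-1}\zeta)$, the same three-zone split at $\delta\sqrt t$ and $A\sqrt t$ treated respectively by Lemma~\ref{lem-dec-vp} (after removing the singular part via (\ref{masse-id})), Lemma~\ref{lem-non-ari}, and Lemma~\ref{lem-hat-g}, with $p_t$ entering only in the outer zone through $\int\widehat\eta_{I}(p_t^{-1}\zeta)\,d\zeta = O(p_t^d)$. No gaps worth noting; the minor constants (e.g.\ $N^2$ versus $N$ in Zone~3) are harmless.
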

\begin{proof}{ of Lemma~\emph{\ref{lem-K1-K2}}}
Note that $\int_{\R}\eta_{p,I}(u)du=1$.  Let $y\in\cE'_t$.   Using the definition of $\cE'_t$ (see (\ref{def-D't})) and the fact that $\cE_t$ is open, one can prove that, for all $u\in\R^d$ 
such that $\|u\|\leq t^{-(d/2+1)}$, we have $y+u\in\overline{\cE}_t$ (prove that $\sup\{a\in[0,1] : y+au \in\cE_t\} = 1$). By applying Taylor's inequality to $f_{k,t}$ and using Lemma~\ref{Bornes_F}, we obtain 
\begin{eqnarray*}
\big|f_{k,t}(y) - (\eta_{p,I} \star f_{k,t})(y)\big| &\leq& \int_{\|u\| \leq t^{-(d/2+1)}}   \eta_{p,I}(u)\, \big| f_{k,t}(y) - f_{k,t}(y-u)\big|\, du  \\
&\ & \qquad \qquad + \ \  \int_{\|u\|> t^{-(d/2+1)}} 
 \eta_{p,I}(u)\, \big|f_{k,t}(y) - f_{k,t}(y-u)\big|\, du \\
&\leq&  O\big(t^{-1/2}\big) +  O(t^{d/2}) \int_{\|v\| > p\,  t^{-(d/2+1)} } \eta_{I}(v) \, dv.
\end{eqnarray*}
Next, if $p\geq t^{d+3/2}$,  
then Markov's inequality gives 
$$\int_{\|v\| > p\,  t^{-(d/2+1)}} \eta_{I}(v) \, dv \leq (2\pi)^{-d/2} \int_{\|v\| > \, t^{(d+1)/2}}  e^{-\|v\|^2/2} \, dv =  O\big(t^{-(d+1)/2}\big).$$
This gives the claimed conclusion for $K_{1,p}(y)$. Similarly we can prove that $K_{3,p}(y) = O(t^{-1/2})$ using the fact that $\eta_{I}(\cdot)$ and its differential are bounded on $\R^d$. 
\end{proof}
\begin{proof}{ of Lemma~\emph{\ref{K2-pt}}}
Since $f_{k,t}(y)= t^{d/2} g_{k,t}(t^{1/2}y)$, we have $\widehat f_{k,t}(\zeta) = \widehat g_{k,t}(t^{-1/2}\zeta)$. For any $t>t_0$ 
and $\zeta\in\R^d$, we set 
$$\Delta_{k,t}(\zeta) := \big|\widehat g_{k,t}(t^{-1/2}\zeta) - e^{-\|\zeta\|^2/2}\big|.$$ 
Let $0<\delta<A$ (fixed) be given by Lemmas~\ref{lem-dec-vp} and \ref{lem-hat-g}. 
From $0<\widehat\eta_{p,I}(\zeta) = \widehat\eta_{I}(p^{-1}\zeta)\leq 1$ and the inverse Fourier formula, the following inequality holds for all $p\in\N^*$ and  $y\in\R^d$:  
\begin{eqnarray*}
(2\pi)^d\, K_{2,p}(y) &\leq& \int_{\R}\Delta_{k,t}(\zeta)\, \widehat\eta_{I}(p^{-1}\zeta)\,  d\zeta \\
& \leq & \int_{\|\zeta\|\leq\delta\sqrt t}\Delta_{k,t}(\zeta)\, d\zeta + \int_{\delta\sqrt t\leq\|\zeta\|\leq A\sqrt t}\Delta_{k,t}(\zeta)\, d\zeta + \int_{\|\zeta\|\geq A\sqrt t} \Delta_{k,t}(\zeta)\, \widehat\eta_{I}(p^{-1}\zeta)\, d\zeta \\[0.15cm]
& & \  := J_1(t) + J_2(t) + J_{3,p}(t).
\end{eqnarray*} 
From (\ref{loi-Yt}) we obtain $\phi_{k,t}(\zeta) = \E_{k}\big[e^{i \zeta Y_t} \, \big] = \widehat g_{k,t}(\zeta) + \widehat \mu_{k,t}(\zeta)$. 
Using this equality and (\ref{masse-id}), we obtain for all $t>t_0$:
\begin{eqnarray*}
J_1(t) &\leq& \int_{\|\zeta\|\leq\delta\sqrt t} \big|\phi_{k,t}(t^{-1/2}\zeta) - e^{-\|\zeta\|^2/2}\big|\, d\zeta +   c\rho^t O(t^{d/2}) \  := \ I_1(t) + O\big( t^{-1/2}\big). \\
J_2(t) &\leq& \int_{\delta\sqrt t\leq\|\zeta\|\leq A\sqrt t} \big|\phi_{k,t}(t^{-1/2}\zeta) - e^{-\|\zeta\|^2/2}\big|\, d\zeta +  c\rho^t O(t^{d/2}) \ := \ I_2(t) + O\big( t^{-1/2}\big). 
\end{eqnarray*}
Now  we obtain from Lemma~\ref{lem-dec-vp}  and Lemma~\ref{lem-non-ari} respectively
\begin{eqnarray*}
I_1(t) &\leq& \int_{\|\zeta\|\leq\delta\sqrt t} \bigg|\lambda(t^{-1/2}\zeta)^{\lfloor t \rfloor}\, L_{k,t}(t^{-1/2}\zeta)  + R_{k,t}(t^{-1/2}\zeta)  - e^{-\|\zeta\|^2/2} \bigg| \, d\zeta \\
&\leq& \int_{\|\zeta\|\leq\delta\sqrt t} \bigg(|L_{k,t}(t^{-1/2}\zeta)|\, \big|\lambda(t^{-1/2}\zeta)^{\lfloor t \rfloor} - e^{-\|\zeta\|^2/2}\big| \\
& & \qquad \qquad \qquad \qquad + \qquad e^{-\|\zeta\|^2/2}\big|L_{k,t}(t^{-1/2}\zeta)-1\big| + |R_{k,t}(t^{-1/2}\zeta)|
\bigg) \, d\zeta \\
&\leq& t^{-1/2} C \bigg[(1+\delta C)\int_{\R^d} (1+\|\zeta\|^3)e^{-\|\zeta\|^2/8} \, d\zeta 
   +  \int_{\R^d} \|\zeta\|\, e^{-\|\zeta\|^2/2}\, d\zeta \bigg] + C\, r^{\lfloor t \rfloor} \int_{\|\zeta\|\leq\delta\sqrt t} \, d\zeta \\
&\leq& O\big(t^{-1/2}\big) + O\big(t^{d/2}r^{\lfloor t \rfloor} \big);\\
I_2(t) &\leq& \int_{\delta\sqrt t\leq\|\zeta\|\leq A\sqrt t} \big|\phi_{k,t}(t^{-1/2}\zeta)\big| \, d\zeta + \int_{\delta\sqrt t\leq\|\zeta\|\leq A\sqrt t} e^{-\|\zeta\|^2/2}\, d\zeta \\
&\leq& O\big(t^{d/2}\tau^{\lfloor t \rfloor}\big) +  \int_{\|\zeta\|\geq \delta\sqrt t} e^{-\|\zeta\|^2/2}\, d\zeta. 
\end{eqnarray*}
Thus, we have proved that there exists a constant $e_1>0$ such that 
\begin{equation} \label{J1-J2}
\forall t>t_0,\quad J_1(t) + J_2(t) \leq e_1\, t^{-1/2}.
\end{equation}
It remains to prove that: $\forall t>t_0,\ J_{3,p_t}(t) = O(t^{-1/2})$. 
We have for any $p\geq1$ 
\begin{eqnarray*}
J_{3,p}(t) &=& \int_{\|\zeta\|\geq A\sqrt t} \big|\widehat g_{k,t}(t^{-1/2}\zeta) - e^{-\|\zeta\|^2/2}\big|\, \widehat\eta_{I}(\zeta/p)\, d\zeta \\
&\leq& \int_{\|\zeta\|\geq A\sqrt t} \big|\widehat g_{k,t}(t^{-1/2}\zeta)\big|\, \, \widehat\eta_{I}(p^{-1}\zeta)\, d\zeta +  \int_{\|\zeta\|\geq A\sqrt t} e^{-\|\zeta\|^2/2}\, d\zeta
\end{eqnarray*}
The last integral is $O(t^{-1/2})$. Next, picking $p=p_t$,  we deduce from Lemma~\ref{lem-hat-g} that: $\forall t>t_0$
\begin{eqnarray*}
 \int_{\|\zeta\|\geq A\sqrt t} \big|\widehat g_{k,t}(t^{-1/2}\zeta)\big|\, \widehat\eta_{I}({p_t}^{-1}\zeta)\, d\zeta 
&\leq& NC\, \frac{t}{2^{t/t_0}}\int_{\R^d}\widehat\eta_{I}({p_t}^{-1}\zeta) \, d\zeta 
\leq NC\, \frac{t\, {p_t}^d}{2^{t/t_0}} \int_{\R^d}\widehat\eta_{I}(\zeta) \, d\zeta.
\end{eqnarray*}
Thus there exists a constant $e_2>0$ such that: $\forall t\geq t_0,\ J_{3,p_t}(t) \leq e_2\, t^{-1/2}$. \end{proof}

\begin{rem} \label{rque-unif}
The uniform version of Theorem~\ref{main-id} in Remark~\ref{main-unif} holds because the conclusions of Lemmas~\ref{lem-dec-vp}, \ref{lem-non-ari} and \ref{lem-hat-g} are valid uniformly in  $P\in \cP_0$ under \emph{\textbf{(U\ref{U1})-(U\ref{U3})}}. This is obvious for  Lemma~\ref{lem-hat-g}. The uniformity in  $P\in \cP_0$ for Lemmas~\ref{lem-dec-vp} and  \ref{lem-non-ari} is derived from the fact that the map $(P,\zeta)\mapsto \widehat{\cY_1}(\zeta)$ is continuous  from $\cP_0\times\R^d$ into $\cM_N(\C)$ under  \emph{\textbf{(U\ref{U1})}} when the moment condition $\M{3}$ holds uniformly in $P\in \cP_0$. See Section~6 in [HL] for details.
\end{rem}


\begin{thebibliography}{FHL12}

\bibitem[Asm03]{Asm03}
S.~Asmussen.
\newblock {\em Applied probability and queues}.
\newblock Springer-Verlag, NY, 2003.

\bibitem[Fel71]{Fel71}
W.~Feller.
\newblock {\em An introduction to probability theory and its applications, Vol.
  {II}}.
\newblock Wiley \& Sons, NY, 1971.

\bibitem[FH67]{FukHit67}
M.~Fukushima and M.~Hitsuda.
\newblock On a class of {M}arkov processes taking values on lines and the
  central limit theorem.
\newblock {\em Nagoya Math. J.}, 30:47--56, 1967.

\bibitem[FHL12]{FerHerLed12}
D.~Ferr\'e, L.~Herv\'e, and J.~Ledoux.
\newblock Limit theorems for stationary {M}arkov processes with
  ${L}^2$-spectral gap.
\newblock {\em Ann. Inst. Henri Poincar\'e Probab. Stat.}, 48:396--423, 2012.

\bibitem[GH88]{GuiHar88}
Y.~Guivarc'h and J.~Hardy.
\newblock Th\'eor\`emes limites pour une classe de cha\^ines de {M}arkov et
  applications aux diff\'eomorphismes d'{A}nosov.
\newblock {\em Ann. Inst. H. Poincar\'e Probab. Statist.}, 24:73--98, 1988.

\bibitem[Gne54]{Gne54}
B.~V. Gnedenko.
\newblock A local limit theorem for densities.
\newblock {\em Doklady Akad. Nauk SSSR (N.S.)}, 95:5--7, 1954.

\bibitem[HH01]{HenHer01}
H.~Hennion and L.~Herv\'e.
\newblock {\em Limit theorems for {M}arkov chains and stochastic properties of
  dynamical systems by quasi-compactness}.
\newblock Lecture Notes in Math. vol. 1766. Springer, 2001.

\bibitem[HS70]{HitShi70}
M.~Hitsuda and A.~Shimizu.
\newblock The central limit theorem for additive functionals of {M}arkov
  processes and the weak convergence to {W}iener measure.
\newblock {\em J. Math. Soc. Japan}, 22:551--566, 1970.

\bibitem[IL71]{IbrLin71}
I.~A. Ibragimov and Y.~V. Linnik.
\newblock {\em Independent and stationary sequences of random variables}.
\newblock Walters-Noordhoff, the Netherlands, 1971.

\bibitem[KW64]{KeiWis64}
J.~Keilson and D.~M.~G. Wishart.
\newblock A central limit theorem for processes defined on a finite {M}arkov
  chain.
\newblock {\em Proc. Cambridge Philos. Soc.}, 60:547--567, 1964.

\bibitem[KZ98]{korZhu98}
V.~Yu. Korolev and Yu.~V. Zhukov.
\newblock On the rate of convergence in the local limit theorem for densities.
\newblock {\em J. Math. Sci.}, 91(3):2931--2941, 1998.

\bibitem[Pin91]{Pin91}
M.~A. Pinsky.
\newblock {\em Lectures on random evolution}.
\newblock World Scientific, River Edge, NJ, 1991.

\bibitem[RS08]{San08}
A.~Rahimzadeh~Sani.
\newblock Central and local limit theorems in {M}arkov dependent random
  variables.
\newblock {\em Bull. Iranian Math. Soc.}, 34(1):23--35, 85, 2008.

\bibitem[{\v{S}}ah66]{Sir66}
N.~{\v{S}}aha{\u\i}darova.
\newblock Uniform local and global theorems for densities.
\newblock {\em Izv. Akad. Nauk UzSSR Ser. Fiz.-Mat. Nauk}, 10(5):90--91, 1966.

\bibitem[Ser00]{Ser00}
B.~Sericola.
\newblock Occupation times in {M}arkov processes.
\newblock {\em Comm. Statist. Stochastic Models}, 16(5):479--510, 2000.

\bibitem[S{\v{S}}65]{SirSha65}
S.~H. Sira{\v{z}}dinov and N.~{\v{S}}aha{\u\i}darova.
\newblock On the uniform local theorem for densities.
\newblock {\em Izv. Akad. Nauk UzSSR Ser. Fiz.-Mat. Nauk}, 9(6):30--36, 1965.

\bibitem[ST11]{SabTar11}
C.~Sabot and P.~Tarr\`es.
\newblock Edge-reinforced random walk, vertex-reinforced jump process and the
  supersymmetric hyperbolic sigma model.
\newblock {\em ArXiv:1111.3991}, 2011.

\bibitem[ST13]{SabTar}
C.~Sabot and P.~Tarr\`es.
\newblock Vertex reinforced jump process and large deviation of reversible
  {M}arkov jump processes.
\newblock {\em In preparation}, 2013.

\end{thebibliography}
\end{document}